\numberwithin{equation}{section}
\theoremstyle{plain}
\newtheorem{prop}{Proposition}
\newtheorem{coro}[prop]{Corollary}
\theoremstyle{definition}
\newtheorem{defi}[prop]{Definition}
\newtheorem{ques}[prop]{Question}
\newtheorem{rema}[prop]{Remark}
\newtheorem{exam}[prop]{Example}
\def\ra{\rightarrow}
\def\cC{{\mathcal C}}
\def\cE{{\mathcal E}}
\def\cI{{\mathcal I}}
\def\cM{{\mathcal M}}
\def\cO{{\mathcal O}}
\def\cQ{{\mathcal Q}}
\def\cR{{\mathcal R}}
\def\cS{{\mathcal S}}
\def\cW{{\mathcal W}}
\def\cX{{\mathcal X}}
\def\cY{{\mathcal Y}}
\def\oM{{\overline{M}}}
\def\ocM{{\overline{\mathcal M}}}
\def\fS{{\mathfrak S}}
\def\sg{{\mathsf g}}
\def\bG{{\mathbb G}}
\def\bP{{\mathbb P}}
\def\bZ{{\mathbb Z}}
\def\bC{{\mathbb C}}
\def\Bl{\mathrm{Bl}}
\def\Pic{\mathrm{Pic}}
\def\Gr{\mathrm{Gr}}
\def\GIT{/\!\!/}
\def\SL{\mathrm{SL}}
\def\PGL{\mathrm{PGL}}
\def\Hom{\mathrm{Hom}}
\def\lim{\mathrm{lim}}
\def\Prym{\mathrm{Prym}}
\def\Sym{\mathrm{Sym}}
\def\IJ{\mathrm{IJ}}
\def\JJ{\mathrm{J}}
\author{Brendan Hassett}
\address{Department of Mathematics\\
Rice University, MS 136 \\
Houston, TX  77251-1892 \\
USA}
\email{hassett@rice.edu}
\author{Yuri Tschinkel}
\address{Courant Institute\\
                New York University \\
                New York, NY 10012 \\
                USA }
\email{tschinkel@cims.nyu.edu}
\address{Simons Foundation\\
160 Fifth Avenue\\
New York, NY 10010\\
USA}
\title{Quartic del Pezzo surfaces over function fields of curves}
\begin{document}
\date{\today}

\maketitle

\section{Introduction}

The geometry of spaces of rational curves of low degree on Fano threefolds is a very active
area of algebraic geometry.  One of the main goals is to understand the Abel-Jacobi morphism 
from the base of a family 
of such curves to the intermediate Jacobian of the threefold.  For instance, does a given family of curves
dominate the intermediate Jacobian?  If so, what are the fibers of this morphism?  Does it
give the maximal rationally connected (MRC) quotient?  
Representative results in this direction are available for:
\begin{itemize}
\item{cubic threefolds \cite{HRS,HRS2,IM,MT};}
\item{Fano threefolds of genus six and degree $10$ in $\bP^7$ \cite{DIM};}
\item{Fano threefolds of genus seven and degree $12$ in $\bP^8$ \cite{IM2};}
\item{moduli of vector bundles \cite{Castravet}---this case makes clear that
one cannot always expect the morphism to the intermediate 
Jacobian to give the MRC fibration.}
\end{itemize}

At the same time, del Pezzo fibrations 
$\pi:\cX \ra \bP^1$ are equally interesting
geometrically.  Moreover, the special case where the rational curves
happen to be {\em sections} of $\pi$ is particularly important
for arithmetic applications.  It is a major open problem to determine
whether or not sections exist
over a non-closed ground field.  Of course, the Tsen-Lang theorem
gives sections when the ground field is algebraically closed.
Even when there are rigid sections,
these are typically defined over extensions of large degree.  

However, suppose that the space of sections 
of fixed height is rationally connected over
the intermediate Jacobian $\IJ(\cX)$.  If the fibration
is defined over a finite field and the space of sections descends
to this field then $\pi$ has sections over that field.  
Indeed, this follows by combining a theorem of Lang \cite{Lang}
(principal homogeneous spaces
for abelian varities over finite fields are trivial) and
a theorem of Esnault \cite{Esn} (rationally connected varieties
over finite fields have rational points).  This point of view
was developed for quadric surface bundles in \cite{HTquad},
with applications to effective versions of weak approximation.

This paper addresses the case of quartic del Pezzo fibrations
$\pi:\cX \ra \bP^1$.  Throughout, we assume $\cX$ is smooth
and the degenerate fibers of $\pi$ have at worst one ordinary singularity.
We classify numerical invariants of the family---the fundamental
invariant is the {\em height}, denoted $h(\cX)$.  We provide
explicit geometric realizations for fibrations of small height.
We exhibit families of
sections parametrizing the intermediate Jacobian and admitting
rationally-connected fibrations over the intermediate Jacobian.
The underlying constructions often involve Brill-Noether theory
for smooth or nodal curves.  
Our main result is the comprehensive analysis of spaces of small-height sections
for height $12$ fibrations in Section~\ref{sect:twelve}; here $\cX$ dominates a 
{\em nodal} Fano threefold of genus seven and degree $12$.

\

{\bf Acknowledgments:} 
The first author was supported by National Science Foundation Grants 0901645, 0968349, and 1148609;
the second author was supported by National Science Foundation
Grants 0739380, 0968318, and 1160859.
We are grateful to Asher Auel, Jean-Louis Colliot-Th\'el\`ene, and
R.~Parimala for useful conversations and Nikita Kozin for comments
on the manuscript; we appreciate
Jason Starr and Yi Zhu explaining the results of
\cite{Zhu} on Abel-Jacobi morphisms for fibrations
of rationally connected varieties over curves.

\section{Basic properties of quartic del Pezzo surfaces}  
Let $X$ be a quartic del Pezzo surface over an algebraically closed field.
$X$ is isomorphic to $\bP^2$ blown up at five distinct points
such that no three are collinear.  Thus we may identify
$$\Pic(X)=\bZ L + \bZ E_1 + \bZ E_2 + \bZ E_3 + \bZ E_4 + \bZ E_5,$$
where $L$ is the pull-back of the hyperplane class on $\bP^2$ and the $E_i$
are the exceptional divisors.  
The primitive divisors
$$\Lambda=K_X^{\perp}\subset \Pic(X)$$
are a lattice under the intersection form; using the basis
$$\{ E_1-E_2,E_2-E_3,E_3-E_4,E_4-E_5,L-E_1-E_2-E_3 \}$$
we have
$$\Lambda \simeq \left( \begin{matrix} 
                        -2 & 1 & 0 & 0 & 0 \\
                        1  & -2 & 1 & 0 & 0\\
                        0  & 1  & -2 & 1 & 1 \\
                        0  & 0  & 1  & -2 & 0 \\
                        0  & 0  & 1  & 0  & -2
                \end{matrix} \right).
$$
Writing $\Lambda^*=\Hom(\Lambda,\bZ)$, the intersection form gives a 
surjection
$$\Pic(X) \twoheadrightarrow \Lambda^*,$$
with kernel generated by $K_X$.
Note that $\Lambda^*/\Lambda$ is a cyclic group of order four. 

The action of the monodromy group on $\Pic(X)$ factors through the Weyl group
$W(D_5)$ coming from the $D_5$ root system contained in the lattice $(-1)\Lambda$.
Abstractly, $W(D_5)$ may be realized as an extension of the symmetric group
\begin{equation} \label{eqn:ext}
1 \ra (\bZ/2\bZ)^4 \ra W(D_5) \ra \fS_5 \ra 1;
\end{equation}
concretely, this is the subgroup of the group of {\em signed} 
$5\times 5$ permutation matrices
where the determinant of the matrix equals the sign of the permutation.

The anticanonical model of $X$ is a complete
intersection of two quadrics 
$$X=\{Q_0=Q_1=0 \} \subset \bP^4.$$
Let $\Gr(2,\Gamma(\cO_{\bP^4}(2)))\simeq \Gr(2,15)$ denote the Grassmannian
parametrizing pencils of quadrics in $\bP^4$.  The group $\SL_5$ acts naturally on
$\bP^4$ and this Grassmannian, linearized
via the Pl\"ucker embedding.
Consider the loci of stable and semistable points
$$U_s \subset U_{ss} \subset \Gr(2,\Gamma(\cO_{\bP^4}(2)))$$
and the resulting GIT quotient
$$\oM:=\Gr(2,\Gamma(\cO_{\bP^4}(2))) \GIT \SL_5.$$

Principal results of \cite{MM} include:
\begin{itemize}
\item{$U_s$ coincides with the locus of smooth quartic del Pezzo surfaces;}
\item{$U_{ss}$ parametrizes quartic del Pezzo surfaces with at worst ordinary double points;}
\item{smooth quartic del Pezzo surfaces are {\em diagonalizable}, i.e., there
exist coordinates $x_0,\ldots,x_4$ on $\bP^4$ such that
$$X=\{x_0^2+x_1^2+x_2^2+x_3^2+x_4^2=c_0x_0^2+c_1x_1^2+c_2x_2^2+c_3x_3^2+c_4x_4^2=0\};$$}
\item{closed orbits in $U_{ss}$ correspond to diagonalizable del Pezzo surfaces,
which have $0,2$, or $4$ ordinary double points;}
\item{the determinantal scheme of a quartic del Pezzo surface
$$X \mapsto Z=\{\det(s_0Q_0+s_1Q_0)=0\} \subset \bP^1_{[s_0,s_1]}= \bP(\Gamma(\cI_X(2)))$$
induces a rational map
\begin{equation} \label{eqn:det}
\Xi:\Gr(2,\Gamma(\cO_{\bP^4}(2))) \dashrightarrow  \bP(\Gamma(\cO_{\bP^1}(5)))\GIT \SL_2,
\end{equation}
regular on $U_{ss}$.
Consequently, we obtain an isomorphism of projective schemes
$$\oM \simeq \bP(\Gamma(\cO_{\bP^1}(5))) \GIT \SL_2 \simeq \oM_{0,5}/\fS_5,$$
where $\oM_{0,5}$ is the moduli space of stable rational curves with five marked
points, with the canonical action of the symmetric group on five letters.}
\end{itemize}

The diagonal representation of a smooth quartic del Pezzo surface
has useful implications:
\begin{quote}
Consider the group $H=(\bZ/2\bZ)^4$, realized as $5\times 5$ diagonal
matrices with $\pm 1$ on the diagonal modulo the subgroup $\pm I$.
Then $H$ acts on $X$ and this action is transitive on 
the $16$ lines of $X$.
\end{quote}
The closure in $U_{ss}$ of the
orbit of any quartic del Pezzo surface with 
a unique ordinary double point contains quartic del Pezzo surfaces with
at least {\em two} ordinary double points.  Indeed, quartic del Pezzo surfaces 
with one ordinary double point are strictly semistable but have orbits
that are not closed.  
In particular, the moduli stack of quartic del Pezzo surfaces with
ordinary singularities is non-separated.
Concretely, fix a quartic del Pezzo surface $X=\{Q_0=Q_1=0\}$ with one ordinary singularity.  
Supposing
the singularity is at $[0,0,0,0,1]$
with tangent space $x_0=0$, 
and also that $Q_0$ cuts out this tangent space and $Q_1$ is nodal at $[0,0,0,0,1]$,
we obtain
$$Q_0=x_0x_4+Q_2(x_0,x_1,x_2,x_3), \quad  Q_1=Q_1(x_0,x_1,x_2,x_3).
$$
Writing $Q_2=x_0 \ell_2(x_0,x_1,x_2,x_3) +R_2(x_1,x_2,x_3)$
and substituting $x_4=x'_4-\ell_2$, we obtain
$$X=\{x'_4 x_0 + R_2(x_1,x_2,x_3)=Q_1=0\}.$$
Writing $Q_1=x_0 \ell_1(x_0,x_1,x_2,x_3)+R_1(x_1,x_2,x_3)$ yields
$$X=\{x'_4 x_0 + R_2(x_1,x_2,x_3)=
x_0\ell_1+R_1(x_1,x_2,x_3)=0\}.$$
Consider the action of the one-parameter subgroup $\rho(t)$ given by
$$
x_0 \mapsto t x_0, \  x_i \mapsto x_i, i=1,2,3, \ 
x'_4 \mapsto t^{-1}x'_4.$$
The limit
$$X_0:=\lim_{t\ra 0}\rho(t)\cdot X=\{
x'_4 x_0 + R_2(x_1,x_2,x_3)=R_1(x_1,x_2,x_3)=0\}$$
has double points $[0,0,0,0,1]$ and $[1,0,0,0,0]$.  
Moreover, when $\{R_1=R_2=0 \} \subset \bP^2$ consists of four
distinct points  we have
$$\mathrm{Aut}^{\circ}(X_0)\simeq \bG_m,$$
i.e., the identity component of the automorphism group
corresponds to $\rho(t)$.

It is a fundamental result of Geometric Invariant Theory 
that the failure of separatedness is governed by
specializations under one-parameter subgroups arising as automorphisms
of objects with closed orbit
(see \cite[Prop.~4.2]{HH}).
Note that there is a unique non-closed orbit in the basin of attraction 
of $X_0$ under $\rho(t)$.   
Thus removing the semistable objects of type $X_0$
eliminates the non-separatedness.  Alternately, we could apply the Kirwan
blow-up process \cite{Kirwan} to eliminate semistable 
points with positive-dimensional
stabilizers.  

We summarize this discussion as follows:
\begin{prop} \label{prop:stacky}
Let $\cM$ denote the moduli stack of quartic del Pezzo surfaces with at worst
one ordinary double point, with coarse moduli space $M$.  Then we have
\begin{itemize}
\item{$\Xi$ (defined in (\ref{eqn:det})) induces a morphism of stacks
$$\xi:\cM \ra [(\ocM_{0,5}\setminus S)/\fS_{5}]$$
where $S$ is the zero-dimensional boundary stratum (see Figure~\ref{fig:FivePoints});}
\item{$\xi$ induces an isomorphism of coarse moduli spaces
$$M \stackrel{\sim}{\ra} (\oM_{0,5}\setminus S)/\fS_{5}.$$}
\end{itemize}
\end{prop}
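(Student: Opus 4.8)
The plan is to upgrade the determinantal construction $\Xi$ of \cite{MM}---which a priori only produces a morphism to the \emph{coarse} quotient $\bP(\Gamma(\cO_{\bP^1}(5)))\GIT\SL_2 \simeq \oM_{0,5}/\fS_5$---to a morphism landing in the moduli \emph{stack} of stable five-pointed rational curves, and then to read off the coarse isomorphism from \cite{MM}. The organizing principle is the dictionary between the singularities of $X=\{Q_0=Q_1=0\}$ and the multiplicity pattern of the determinantal divisor $Z=\{\det(s_0Q_0+s_1Q_1)=0\}\subset \bP^1$: the surface $X$ is smooth exactly when the binary quintic $\det(s_0Q_0+s_1Q_1)$ has five distinct roots, and $X$ has a single ordinary double point exactly when this quintic has one double root (Segre symbol $[2,1,1,1]$) with the remaining three roots simple. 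Thus the hypothesis that $X$ have at worst one node forces the multiplicity pattern of $Z$ to be $[1,1,1,1,1]$ or $[2,1,1,1]$, and never $[2,2,1]$.

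First I would construct $\xi$ on families. Over $\cM$ the pencil of quadrics through the universal surface is intrinsic, yielding a $\bP^1$-bundle $\cB\to\cM$ together with the relative determinantal divisor $\cZ\subset\cB$, a degree-five divisor that is flat over $\cM$ with the fibral multiplicities recorded above. The five roots are defined only up to the monodromy $\fS_5$ appearing in \eqref{eqn:ext}, so ordering them defines a canonical $\fS_5$-torsor $\widetilde{\cM}\to\cM$, over which $\cZ$ becomes five (possibly colliding) sections of $\cB$. To convert this into a family of \emph{stable} pointed curves I would stabilize: along the locus $\Delta\subset\widetilde{\cM}$ where two sections meet---which by the dictionary is precisely the one-node locus---blow up $\cB$ to insert a bubble carrying the two colliding markings, producing a family of five-pointed genus-zero stable curves and hence a classifying morphism $\widetilde{\cM}\to\ocM_{0,5}$. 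This morphism is $\fS_5$-equivariant by construction, so it descends to $\xi:\cM\to[\ocM_{0,5}/\fS_5]$. Because the fibral multiplicities never reach $[2,2,1]$, every resulting stable curve has at most one node and therefore avoids the zero-dimensional stratum $S$ (the chains of three $\bP^1$'s, i.e.\ the $[2,2,1]$ configurations); thus $\xi$ factors through $[(\ocM_{0,5}\setminus S)/\fS_5]$.

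For the statement about coarse spaces, passing to coarse moduli in the construction above recovers exactly the descent of $\Xi$, so the induced map $M\to(\oM_{0,5}\setminus S)/\fS_5$ is the restriction of the isomorphism $\oM\stackrel{\sim}{\to}\oM_{0,5}/\fS_5$ of \cite{MM}. It remains to check that this restriction is a bijection onto the complement of $S$. Both surjectivity and injectivity follow from the dictionary: smooth surfaces are diagonalizable and are determined up to isomorphism by their eigenvalues, i.e.\ by the reduced configuration $[1,1,1,1,1]$, while a one-node surface has Segre symbol $[2,1,1,1]$, which together with the root configuration determines the pencil up to projective equivalence, so one-node surfaces correspond bijectively to the $[2,1,1,1]$ locus. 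No surface with at worst one node has determinantal pattern $[2,2,1]$, since two double roots force at least two nodes; hence the image is exactly $\oM_{0,5}\setminus S$ modulo $\fS_5$. A bijective morphism that restricts to an isomorphism over the dense open locus of smooth surfaces and has normal target is an isomorphism, giving $M\stackrel{\sim}{\to}(\oM_{0,5}\setminus S)/\fS_5$.

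The main obstacle is the second step: \cite{MM} supplies only the \emph{coarse} determinantal morphism, and promoting it to a morphism of stacks requires the family-level stabilization---the bubbling construction along the one-node locus $\Delta$---to be carried out functorially and $\fS_5$-equivariantly. The delicate points are the flatness of $\cZ$ over $\cM$ with the asserted fibral multiplicities (where the singularity--multiplicity dictionary does genuine work) and the verification that the two colliding branches meet transversally, so that after blow-up they separate on the exceptional $\bP^1$ and the inserted bubble is canonical; this is what guarantees that the resulting five-pointed curve is stable and that the classifying map descends past the $\fS_5$-torsor. Once this functorial stabilization is established, both the containment in $\ocM_{0,5}\setminus S$ and the coarse isomorphism are formal consequences of the multiplicity bookkeeping and \cite{MM}.
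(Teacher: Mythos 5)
Your construction of the morphism of \emph{stacks} has a genuine gap, and it sits exactly at the step you flag as delicate. The cover $\widetilde{\cM}\to\cM$ obtained by ordering the five roots of the determinantal quintic is \emph{not} an \'etale $\fS_5$-torsor: over the one-node divisor $\Delta\subset\cM$ the degree-five cover $\cZ\to\cM$ is \emph{simply branched}, i.e.\ the local monodromy of the roots around $\Delta$ is a transposition. This is not something you can argue away by a transversality check --- the paper itself relies on it later, when it asserts that for a fibration $\pi:\cX\to\bP^1$ with square-free discriminant the quintic cover $D\to\bP^1$ is simply branched over the $2h(\cX)$ singular fibers; that is precisely the statement that a disc transverse to $\Delta$ sees transposition monodromy. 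Consequently, near a generic point of $\Delta$ the two ``colliding branches'' of $\cZ$ are a single irreducible ramified double cover of the base rather than two crossing sections: there is nothing to separate by blowing up, the bubble with its two markings only exists after the ramified base change $t=s^2$, and the descent fails because $\widetilde{\cM}\to\cM$ is ramified while $\ocM_{0,5}\to[\ocM_{0,5}/\fS_5]$ is \'etale. (Concretely: a map from a disc to $[\ocM_{0,5}/\fS_5]$ pulls the universal torsor back to the trivial torsor, whereas your family over the punctured disc produces the ordering torsor with transposition monodromy.) What the determinantal construction \emph{does} give functorially is the pair consisting of the $\bP^1$-bundle of pencils and the relative degree-five divisor with at worst double points, i.e.\ a morphism to the quotient stack of binary quintics by $\PGL_2$; this has the same coarse space as $[(\ocM_{0,5}\setminus S)/\fS_5]$ and agrees with it over the locus of smooth surfaces, but differs along the boundary by the $\bZ/2$ swapping the two markings on the bubble. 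As written, the bubbling-and-descent step does not go through, and you would need either to switch to that target or to show the $\bZ/2$ is absorbed by automorphisms of the one-node surfaces.

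Your treatment of the coarse-space statement, by contrast, is essentially correct, and it takes a different route from the paper's. You use the Segre-symbol dictionary (patterns $[1,1,1,1,1]$ and $[2,1,1,1]$ versus $[2,2,1]$, plus the classical fact that the symbol together with the root configuration determines the pencil up to projective equivalence) to get bijectivity directly, and close with Zariski's main theorem. The paper instead stays entirely within the GIT framework of \cite{MM}: $\Xi$ is regular on $U_{ss}$, the quotient is $\oM_{0,5}/\fS_5$, and the decisive input is the explicit one-parameter subgroup $\rho(t)$ degenerating a one-node surface to the two-node surface $X_0$, which shows that over each boundary point away from $S$ the fiber of the quotient consists of the closed orbit of an $X_0$ together with a unique non-closed orbit of one-node surfaces; deleting the $X_0$-type objects therefore leaves a geometric quotient and simultaneously cures the non-separatedness of the stack. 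Your Segre-symbol argument buys a cleaner, classification-based proof of injectivity; the paper's orbit analysis buys the separatedness statement for $\cM$ as a by-product. Do note that the relevant two-node degenerations $X_0$ have symbol $[(11),1,1,1]$ and hence lie over the boundary divisor \emph{minus} $S$, not over $S$; your phrase ``two double roots force at least two nodes'' is true, but its converse is not, so the identification of one-node and two-node surfaces in the GIT quotient over $[2,1,1,1]$-points still has to be dealt with --- which your appeal to Segre's uniqueness theorem does, once you delete the $[(11),1,1,1]$ surfaces from the source.
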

\begin{figure} 
\includegraphics[scale=.7]{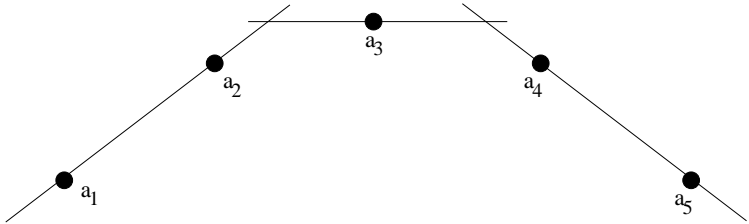}
\caption{The zero-dimensional boundary stratum}
\label{fig:FivePoints}
\end{figure}

The invariants of binary quintics were computed in the 19th century; see
\cite{Baker} for a description convenient for our purposes.  
It follows that 
$$M\simeq \bP(1,2,3)\setminus s$$
where $s$ is the image of $S$, and a point where the minimal degree invariant is non-zero.  
Thus we obtain
\begin{coro}  \label{coro:invariant}
$\Pic(M) \simeq \bZ$.
\end{coro}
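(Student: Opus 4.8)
The plan is to deduce the Picard group from the identification $M\simeq \bP(1,2,3)\setminus s$ stated just before the corollary. Let me sketch how to compute $\Pic$ of a weighted projective space and then account for removing a point.

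First I would recall that for a well-formed weighted projective space $\bP(a_0,\dots,a_n)$ over an algebraically closed field, the divisor class group is $\Cl(\bP(a_0,\dots,a_n))\simeq\bZ$, generated by $\cO(1)$ in the appropriate normalization. For $\bP(1,2,3)$, which is two-dimensional and well-formed (the weights are pairwise coprime in the sense required: any two of $\{1,2,3\}$ are coprime), one checks that the singularities are quotient singularities occurring in codimension $\geq 2$. On the smooth locus the Picard group and class group agree, and since $\bP(1,2,3)$ is a normal projective rational surface, its class group is $\bZ$. So $\Cl(\bP(1,2,3))\simeq\bZ$.

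Next I would pass from the class group to the Picard group and handle the removal of the single point $s$. Removing a closed point of codimension $2$ (a point on a surface) does not change the class group: there is an excision exact sequence $\bZ\cdot[s]\to \Cl(\bP(1,2,3))\to \Cl(\bP(1,2,3)\setminus s)\to 0$, but since $s$ has codimension $2$ it contributes no divisorial class, so $\Cl$ is unchanged and remains $\bZ$. If $s$ is a smooth point of $\bP(1,2,3)$, then locally $\Pic$ and $\Cl$ coincide near $s$ and the restriction map $\Pic(\bP(1,2,3))\to\Pic(\bP(1,2,3)\setminus s)$ is an isomorphism; more robustly, one argues that any line bundle on the complement extends across a codimension-$2$ smooth point by Hartogs/reflexivity of its sheaf of sections. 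Thus $\Pic(M)\simeq\Pic(\bP(1,2,3)\setminus s)\simeq\bZ$.

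The main subtlety I expect is book-keeping between $\Pic$ and $\Cl$ on the singular weighted projective space: $\bP(1,2,3)$ has genuine quotient singularities, so $\Pic$ and $\Cl$ differ on the whole space (line bundles must be trivial near the cyclic quotient points, whereas Weil divisors need not be Cartier). I would therefore be careful to state whether the corollary concerns $\Pic$ in the sense of the coarse space $M$ or the stack; since $M$ is the coarse moduli space of $\cM$, the cleanest route is to observe that $M$, being $\bP(1,2,3)$ minus a point, is a rational projective surface whose class group is $\bZ$, and to note that the distinction between $\Pic$ and $\Cl$ only affects torsion/divisibility, not the rank — so in all reasonable interpretations the rank-one conclusion $\Pic(M)\simeq\bZ$ holds. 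Verifying that $s$ avoids the singular points of $\bP(1,2,3)$ (or tracking its local structure if it does not) is the one point requiring the explicit invariant-theoretic description from \cite{Baker}.
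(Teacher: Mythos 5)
Your argument is correct and follows the same route the paper intends: the corollary is drawn directly from the identification $M\simeq \bP(1,2,3)\setminus s$ stated immediately beforehand, and you supply exactly the details the paper omits (that $\Cl$ of a weighted projective plane is $\bZ$, that deleting a codimension-two point changes neither $\Cl$ nor, at a smooth point, $\Pic$, and that the $\Pic$-versus-$\Cl$ discrepancy at the two quotient singularities only affects the index, not the rank). Your final caveat about checking that $s$ avoids the singular points is precisely what the paper's remark that ``the minimal degree invariant is non-zero'' at $s$ is there to guarantee.
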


\section{Quartic del Pezzo fiber spaces over $\bP^1$}

We work over an algebraically closed field of characteristic different from two.
\begin{prop} \label{prop:hodge}
Let $\cX$ be a smooth projective threefold and $\pi:\cX \ra \bP^1$ 
a morphism such that the generic fiber is a 
smooth rational surface.  Then
\begin{itemize}
\item{$\cX$ is separably rationally connected;}
\item{the Hodge numbers $h^{ij}(\cX)=0$ for $(i,j)\neq (0,0)$, $(1,1)$, $(2,2)$, $(3,3)$, $(1,2)$, $(2,1)$.}
\end{itemize}
\end{prop}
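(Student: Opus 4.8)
The plan is to treat the two assertions separately, and to derive the Hodge vanishing as a formal consequence of separable rational connectedness together with Serre duality.

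For the first bullet I would begin from the observation that the generic fiber $X_\eta$ is a smooth rational surface, hence rational, and therefore separably rationally connected (separable rational connectedness is a birational invariant among smooth projective varieties, and $\bP^2$ is separably rationally connected). Since $\cX$ is smooth and projective and $\pi$ maps onto the rational curve $\bP^1$ with separably rationally connected generic fiber, I would invoke the theorem of Graber--Harris--Starr in characteristic zero, and its extension to separably rationally connected fibers in arbitrary characteristic due to de Jong--Starr: such a fibration admits a section, which can be arranged to be free. Combining a free section with very free rational curves lying in the (separably rationally connected) fibers then produces a very free curve through a general point of $\cX$; equivalently, a fibration over $\bP^1$ with separably rationally connected generic fiber has separably rationally connected total space. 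This gives the first bullet.

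For the second bullet I would exploit the result of the first twice. Separable rational connectedness forces $H^0(\cX,\Omega^p_{\cX})=0$ for every $p\ge 1$, so $h^{p,0}=0$ for $p=1,2,3$; and rational connectedness gives $H^i(\cX,\cO_{\cX})=0$ for every $i\ge 1$, that is $h^{0,i}=0$ for $i=1,2,3$, while $h^{0,0}=1$. Serre duality on the threefold, $h^{p,q}=h^{3-p,3-q}$, then disposes of the remaining off-diagonal entries: $h^{3,1}=h^{0,2}=0$, $h^{3,2}=h^{0,1}=0$, $h^{1,3}=h^{2,0}=0$, $h^{2,3}=h^{1,0}=0$, and $h^{3,0}=h^{0,3}=0$, together with $h^{3,3}=h^{0,0}=1$. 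The only entries not forced to vanish are therefore $(0,0),(3,3),(1,1),(2,2)$ and the middle pair $(1,2),(2,1)$, exactly as asserted. I would stress that this uses only the definitions $h^{p,0}=\dim H^0(\cX,\Omega^p_{\cX})$, $h^{0,q}=\dim H^q(\cX,\cO_{\cX})$ and Serre duality, so it never appeals to Hodge symmetry and is valid in the stated (possibly positive) characteristic.

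I expect the main obstacle to be entirely concentrated in the first bullet: once separable rational connectedness of $\cX$ is in hand, the Hodge computation is purely formal. The delicate input is the existence of a section for a fibration that is \emph{not} smooth, its special fibers being singular del Pezzo surfaces; this is precisely where I would lean on the Graber--Harris--Starr / de Jong--Starr machinery rather than on any elementary argument. Checking that the hypotheses hold (smoothness and projectivity of $\cX$, separable rational connectedness of $X_\eta$) is routine, but the conclusion is genuinely a theorem. A secondary point needing a little care is the vanishing $H^i(\cX,\cO_{\cX})=0$ in positive characteristic, which I would justify through separable rational connectedness via a very free curve rather than through complex Hodge theory.
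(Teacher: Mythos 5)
Your first bullet follows the paper's route exactly: produce a section (de Jong--Starr, or Tsen--Lang plus the classification of rational surfaces), then smooth the comb formed by the section and very free curves in the fibers \`a la Graber--Harris--Starr to get a very free section. Your deduction of $h^{p,0}=0$ for $p=1,2,3$ from separable rational connectedness is also the paper's argument (Koll\'ar, IV.3.8), and the Serre duality bookkeeping is fine. The gap is concentrated in the single sentence ``rational connectedness gives $H^i(\cX,\cO_{\cX})=0$ for every $i\ge 1$.'' In characteristic zero this is Hodge symmetry, which you have explicitly renounced; in positive characteristic it is \emph{not} a formal consequence of (separable) rational connectedness, and no ``very free curve'' argument is known to yield it. A very free curve kills global sections of $(\Omega^1_{\cX})^{\otimes m}$, hence the $h^{p,0}$; it says nothing directly about $H^1(\cX,\cO_{\cX})$ or $H^2(\cX,\cO_{\cX})$. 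Concretely, $H^1(\cX,\cO_{\cX})$ is the tangent space to the Picard scheme, which for an SRC variety in characteristic $p$ could a priori be a nonreduced finite group scheme even though $\Pic^0_{\mathrm{red}}$ is trivial; the vanishing of $H^i(X,\cO_X)$ for SRC $X$ in characteristic $p$ is a well-known delicate point, not something you may cite as routine. Since you need exactly $h^{0,1}=h^{0,2}=0$ (equivalently, by Serre duality, $h^{3,2}=h^{3,1}=0$), this is a real hole, and the proposition is asserted over a field of characteristic $\ne 2$, so you cannot retreat to characteristic zero.

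The paper closes this hole by using the fibration rather than abstract rational connectedness: the fibers of $\pi$ are rational surfaces (the degenerate ones have at worst rational singularities, or are complete intersections of quadrics, so in any case $H^{>0}(\text{fiber},\cO)=0$), whence $\pi_*\cO_{\cX}=\cO_{\bP^1}$ and $R^{i}\pi_*\cO_{\cX}=0$ for $i>0$ by cohomology and base change; the Leray spectral sequence then gives $H^i(\cX,\cO_{\cX})=H^i(\bP^1,\cO_{\bP^1})=0$ for $i\ge 1$, and Serre duality finishes. If you replace your appeal to ``RC $\Rightarrow H^i(\cO)=0$'' with this Leray computation, your proof becomes complete and coincides with the paper's.
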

\begin{proof}
$\pi$ admits a section $s:\bP^1 \ra \cX$.  This follows from general results \cite{dJS} or 
by the classification of rational surfaces and the Tsen-Lang theorem (cf.~\cite{HasCMI}).
Since the smooth fibers of $\pi$ are rational surfaces, there exist very
free curves in the fibers incident to $s(\bP^1)$.   The techniques of \cite{GHS}
thus yield a very free section of $\pi$, proving separably rational connectedness.

It follows \cite[IV.3.8]{KollarRC} that
$$\Gamma(\Omega^1_{\cX})=\Gamma(\Omega^2_{\cX})=\Gamma(\Omega^3_{\cX})=0$$
and by Serre duality
$$H^3(\Omega^2_{\cX})=H^3(\Omega^1_{\cX})=H^3(\cO_{\cX})=0.$$
The Leray spectral sequence for $\pi$ and Serre duality also give
$$H^1(\cO_{\cX})=H^2(\cO_{\cX})=0 \quad
H^2(\Omega^3_{\cX})=H^1(\Omega^3_{\cX})=0.$$
\end{proof}

Assume that $\pi:\cX \ra \bP^1$ is a fibration of quartic del Pezzo surfaces,
with $\cX$ smooth.
The fibration has {\em square-free} discriminant if
the fibers are complete intersections of two quadrics
with at worst one ordinary double point; we assume this from now on.  
Under these assumptions, the relative dualizing sheaf $\omega_{\pi}$
is invertible, the direct image of its dual
$$\pi_*\omega_{\pi}^{-1}$$
is locally free, and the higher direct images vanish.  Cohomology-and-base-change
and the fact that $\omega_{\pi}^{-1}$ is very ample on fibers of $\pi$ yield
an embedding $\cX \subset \bP((\pi_*\omega_{\pi}^{-1})^{\vee})$.
Let $\cI_{\cX}(2)$ denote the ideal sheaf of $\cX$
twisted by $\cO_{\bP((\pi_*\omega_{\pi}^{-1})^{\vee})}(2)$;
then $\pi_*(\cI_{\cX}(2)) \subset \Sym^2 (\pi_*\omega_{\pi}^{-1})$
is a subbundle of rank two with quotient $\pi_*\omega_{\pi}^{-2}$.

\begin{prop} \label{prop:liftable}
Let $\pi:\cX \ra \bP^1$ be a fibration of quartic
del Pezzo surfaces with square-free discriminant,
defined over a field of characteristic $p>2$.  Then 
$\cX$ and $\pi$ are liftable to characteristic
zero.  
\end{prop}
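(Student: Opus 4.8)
The plan is to reconstruct the entire fibration from the linear-algebra data isolated just above the statement and to lift that data directly over a complete discrete valuation ring. Concretely, set $\cE := \pi_*\omega_{\pi}^{-1}$ and $\cF := \pi_*(\cI_{\cX}(2)) \subset \Sym^2\cE$; by Grothendieck's theorem both split as direct sums of line bundles on $\bP^1$, and $\cX \subset \bP(\cE^{\vee})$ is the relative complete intersection cut out by the rank-two subbundle $\cF$ of quadrics. I would first choose a complete discrete valuation ring $R$ with residue field $k$ and fraction field of characteristic zero (a Cohen ring; since $p>2$ we have $2 \in R^{\times}$). Because $\Pic(\bP^1_R) \to \Pic(\bP^1_k)$ is an isomorphism, each line-bundle summand of $\cE$ and of $\cF$ lifts canonically, yielding bundles $\cE_R, \cF_R$ on $\bP^1_R$ reducing to $\cE,\cF$.

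Next I would lift the inclusion $\cF \hookrightarrow \Sym^2\cE$. The key point is that there is no cohomological obstruction: since $\cF_R$ and $\Sym^2\cE_R$ are sums of line bundles, $\mathcal{H}om(\cF_R,\Sym^2\cE_R)$ is again a sum of line bundles $\cO_{\bP^1_R}(c)$, and $H^0(\bP^1_R,\cO(c)) = R^{\,c+1}$ is free and surjects onto $H^0(\bP^1_k,\cO(c))$. Hence the defining homomorphism --- equivalently, the two defining quadrics $Q_0,Q_1$ --- lifts simply by lifting its polynomial coefficients arbitrarily from $k$ to $R$. Let $\cX_R \subset \bP(\cE_R^{\vee})$ be the common zero locus of the lifted quadrics. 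Its special fibre is $\cX$, and since $\cX$ has codimension two in the $5$-dimensional bundle, the two quadrics form a relative regular sequence; thus $\cX_R$ is a relative complete intersection, flat over $R$ with three-dimensional fibres, carrying $\pi_R:\cX_R \to \bP^1_R$ reducing to $\pi$.

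Finally I would transport the good properties from the special to the generic fibre. Each condition we need --- that the lifted $\cF_R$ is a subbundle of $\Sym^2\cE_R$, that $\cX_R \to \mathrm{Spec}\,R$ has smooth fibres, and that $\pi_R$ has square-free discriminant --- is \emph{open} on $\mathrm{Spec}\,R$: the first because the rank-drop locus is closed and its image under the proper map $\bP^1_R \to \mathrm{Spec}\,R$ is closed; the second by openness of the smooth locus together with properness of $\cX_R/R$; the third because square-freeness is the non-vanishing of the discriminant of the binary quintic $\det(s_0Q_0+s_1Q_1)$. All three hold on the special fibre, since that is precisely the hypothesis on $\pi$. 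As the closed point of $\mathrm{Spec}\,R$ lies in no proper open subset, any open set containing it is all of $\mathrm{Spec}\,R$; hence the intersection of these three open loci is everything, so the generic fibre of $\cX_R/R$ is a smooth quartic del Pezzo fibration with square-free discriminant over $\bP^1_{\mathrm{Frac}(R)}$, lifting $(\cX,\pi)$ to characteristic zero. Moreover $\cX_R \to \mathrm{Spec}\,R$ is then smooth everywhere, so its total space is regular and furnishes the required lift. Notably, \emph{any} lift of the coefficients works, because every condition is open and already holds on the special fibre.

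The step I expect to demand the most care is this passage from special to generic fibre: a priori one must rule out that smoothness of the total space or square-freeness of the discriminant degenerates in the lift. What makes this painless --- and is the conceptual heart of the argument --- is the combination of two facts: the splitting of bundles on $\bP^1$ eliminates every $H^1$-obstruction to lifting the defining equations, and the local structure of $\mathrm{Spec}\,R$ forces any open condition valid at the closed point to propagate to the generic point. In particular no genuine deformation-theoretic obstruction, such as a class in $H^2(\cX,T_{\cX})$, ever needs to be computed.
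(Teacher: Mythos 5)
Your proposal is correct and follows essentially the same route as the paper: split $\pi_*\omega_{\pi}^{-1}$ and $\pi_*\cI_{\cX}(2)\subset\Sym^2\pi_*\omega_{\pi}^{-1}$ into line bundles via Grothendieck, lift summand-by-summand, lift the defining homomorphism (no $H^1$-obstruction on $\bP^1$), and propagate smoothness and square-free discriminant by openness. The only cosmetic difference is that you lift the subbundle inclusion directly and verify it stays a subbundle by a properness argument, whereas the paper lifts the quotient map $\Sym^2 V\twoheadrightarrow\pi_*\omega_{\pi}^{-2}$ and its kernel, invoking Shatz's criterion for subbundles.
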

\begin{rema} \label{rema:HdR}
Assuming Proposition~\ref{prop:liftable}, the Hodge-de Rham
spectral sequence for $\cX$ degenerates provided $p>3$ (or $=0$)
\cite{DI}.  Thus
$h^{11}(\cX)$ equals the
Picard rank of $\cX$, and $h^{11}$ and $h^{12}$ are unchanged 
on reduction to $\cX$ from characteristic zero.  
\end{rema}
\begin{proof}
Every vector bundle over $\bP^1$ decomposes as a direct
sum of line bundles
$$V\simeq \oplus_{i=1}^r \cO_{\bP^1}(a_i).$$
Clearly any such direct sum lifts to characteristic
zero, i.e., lift each summand individually.
Cohomology commutes with reduction modulo $p$
for such a lift; given lifts of two such bundles,
taking homomorphisms between them commutes with reduction as well.  

In particular, we may lift the decomposition
of $V=\pi_*\omega_{\pi}^{-1}$ to characteristic zero;
the same holds true for $\Sym^2 V$
and its quotient
$V'=\pi_*\omega_{\pi}^{-2}$.  There is a combinatorial
criterion \cite{Shatz}
for which vector bundles arise as subbundles
of a given bundle, independent of the characteristic.
In particular, the quotient homomorphism
$$ \Sym^2 V \twoheadrightarrow V'$$
lifts to characteristic zero, as does its kernel.  
The resulting rank-two subbundle gives the defining 
equations of the lift of $\cX$ to characteristic zero.
Having square-free discriminant is an open condition,
so the resulting lift shares this property.
\end{proof}

We define
$$
h(\cX):=\deg (c_1(\omega_{\pi})^3)
$$
to be the {\em height} of the del Pezzo surface fibration.  
Corollary~\ref{coro:invariant} implies this is the unique numerical 
invariant for families with square-free discriminant:
\begin{prop} \label{prop:numerology}
Let $\pi:\cX \ra \bP^1$ be a quartic del Pezzo fibration with square-free
discriminant.
Then we have:
\begin{itemize}
\item{$h(\cX)=-2\deg(\pi_*\omega^{-1}_{\pi})$;}
\item{
$\Delta:=\text{number of singular fibers of }\pi=2h(\cX)$;}
\item{the topological Euler characteristic
$$\begin{array}{rcl}
\chi(\cX)&=&(2-\Delta)\cdot \chi(\text{smooth fiber})+\Delta\cdot \chi(\text{singular fiber})\\
&=&16-2h(\cX);
\end{array}
$$}
\item{
$\chi(\Omega^1_{\cX})=1-\chi(\cX)/2=h(\cX)-7$;}
\item{
$\text{expected number of parameters}=-\chi(T_{\cX})=\frac{3}{2}h(\cX)-1$.}
\end{itemize}
\end{prop}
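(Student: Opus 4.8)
The plan is to push every quantity onto the ambient $\bP^4$-bundle and reduce to two universal integrals. Set $V=\pi_*\omega_{\pi}^{-1}$ (rank $5$), $e=\deg V$, and let $P=\bP(V^{\vee})$ with structure map $\pi\colon P\ra\bP^1$, tautological class $\xi=c_1(\cO_P(1))$ normalized so that $\omega_{\pi}^{-1}=\cO_P(1)|_{\cX}$, and fiber class $f=\pi^*[\mathrm{pt}]$. Let $W=\pi_*\cI_{\cX}(2)\subset\Sym^2V=\pi_*\cO_P(2)$ be the rank-$2$ subbundle; its inclusion corresponds by adjunction to a section of $N:=\cO_P(2)\otimes\pi^*W^{\vee}$ with zero scheme $\cX$, so $[\cX]=c_2(N)=4\xi^2-2(\deg W)\,\xi f$ in $P$. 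Everything rests on the projective-bundle identities $\int_P\xi^5=e$ and $\int_P\xi^4f=1$ (Segre classes of $V^{\vee}$ on a curve) together with $f^2=0$. To get the first item I would first show $\deg W=e$: comparing adjunction $\omega_{\cX}=(\omega_P\otimes\det N)|_{\cX}$ with $\omega_{\cX}=\omega_{\pi}\otimes\pi^*\omega_{\bP^1}$ and $\omega_{\pi}=\cO_P(-1)|_{\cX}$, and using that $\pi^*\colon\Pic(\bP^1)\ra\Pic(\cX)$ is injective (a section exists by Proposition~\ref{prop:hodge}), forces $\det V\cong\det W$. Then $h(\cX)=\deg c_1(\omega_{\pi})^3=-\int_{\cX}\xi^3=-\int_P\xi^3\,c_2(N)=-2e=-2\deg(\pi_*\omega_{\pi}^{-1})$.

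For the next two items I would run two independent strands and match them. The first equality in the third item is the standard additivity of the topological Euler characteristic for a fibration with finitely many isolated singular fibers, $\chi(\cX)=\chi(\bP^1)\,\chi_{\mathrm{sm}}+\Delta\,(\chi_{\mathrm{sing}}-\chi_{\mathrm{sm}})$; here $\chi_{\mathrm{sm}}=\chi(\mathrm{dP}_4)=8$ and $\chi_{\mathrm{sing}}=7$, since resolving the single node replaces it by a $\bP^1$ and drops $\chi$ by one, so $\chi(\cX)=16-\Delta$. Independently I would compute $\chi(\cX)=\int_{\cX}c_3(T_{\cX})$ from the normal-bundle sequence $c(T_{\cX})=c(T_P)|_{\cX}/c(N)|_{\cX}$, reading $c(T_P)$ off the relative Euler sequence $0\ra\cO\ra\pi^*V^{\vee}\otimes\cO_P(1)\ra T_{P/\bP^1}\ra0$ multiplied by $c(\pi^*T_{\bP^1})=1+2f$. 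Expanding $c(T_P)\,c(N)^{-1}$ to degree three, capping with $[\cX]=c_2(N)$, and applying the two integrals yields $\chi(\cX)=16+4e=16-2h$, hence $\Delta=2h$. This degree-three expansion and contraction is the one genuinely calculational step and the place where the normalization of $\xi$ must be tracked carefully; it is the main obstacle.

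The last two items are then formal. For the fourth, the Hirzebruch--Riemann--Roch identity $\sum_p(-1)^p\chi(\Omega^p_{\cX})=\int_{\cX}c_3(T_{\cX})=\chi(\cX)$ combined with Serre duality ($\chi(\Omega^1_{\cX})=-\chi(\Omega^2_{\cX})$, $\chi(\cO_{\cX})=-\chi(\Omega^3_{\cX})$) gives $\chi(\cX)=2\chi(\cO_{\cX})-2\chi(\Omega^1_{\cX})$; since $\chi(\cO_{\cX})=1$ by Proposition~\ref{prop:hodge}, this yields $\chi(\Omega^1_{\cX})=1-\chi(\cX)/2=h-7$ (no Hodge-theoretic input is needed, only algebraic Serre duality). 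For the fifth, Hirzebruch--Riemann--Roch for the tangent bundle of a threefold gives $\chi(T_{\cX})=\int_{\cX}\bigl(\tfrac12c_1^3-\tfrac{19}{24}c_1c_2+\tfrac12c_3\bigr)$ with $c_i=c_i(T_{\cX})$; here $\int_{\cX}c_1c_2=24\chi(\cO_{\cX})=24$, $\int_{\cX}c_3=\chi(\cX)=16-2h$, and $\int_{\cX}c_1^3=-K_{\cX}^3=24-h$, the last from $K_{\cX}=c_1(\omega_{\pi})-2F$ (with $F$ the fiber class of $\pi|_{\cX}$) together with $c_1(\omega_{\pi})^3=h$, $c_1(\omega_{\pi})^2\cdot F=K_{X_t}^2=4$ and $F^2=0$. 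Substituting gives $-\chi(T_{\cX})=\tfrac32h-1$.
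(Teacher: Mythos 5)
Your argument is correct, and for the first two items it takes a genuinely different route from the paper. The paper observes that $h(\cX)$, $\deg(\pi_*\omega_{\pi}^{-1})$, and $\Delta$ are all pulled back along the classifying map $\mu:\bP^1 \ra \cM$ and hence, since $\Pic(M)\simeq \bZ$ (Corollary~\ref{coro:invariant}), are in constant proportion; it then reads off the proportionality constants from the single worked example in Case 1 of Section~\ref{sect:explicit}. You instead compute directly on the ambient bundle $\bP(V^{\vee})$, using $[\cX]=c_2(N)$, the Segre relations $\int_P \xi^5=\deg V$ and $\int_P\xi^4 f=1$, and the adjunction comparison forcing $\det W\cong \det V$; and you obtain $\Delta=2h(\cX)$ by playing Gauss--Bonnet against the fibration additivity of $\chi$. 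Your route is more self-contained (it does not lean on the GIT input behind Corollary~\ref{coro:invariant}) and produces the constants uniformly rather than from an example, at the price of the degree-three expansion of $c(T_P)\,c(N)^{-1}$, which you defer as ``the main obstacle''; I checked that this expansion gives $c_3(T_{\cX})=-2\xi^3+(4+2\deg V)\,\xi^2 f$ and hence $\chi(\cX)=16+4\deg V=16-2h(\cX)$, so the deferred step does go through and is consistent with your other two computations. The last three items are handled essentially as in the paper (Euler-characteristic additivity, the alternating-sum identity plus Serre duality and $\chi(\cO_{\cX})=1$, and Hirzebruch--Riemann--Roch for $T_{\cX}$ with $c_1c_2=24\chi(\cO_{\cX})$ and $c_1(\omega_{\cX})^3=h(\cX)-24$); the only deviation is that you justify $\chi(\cX)=\sum_p(-1)^p\chi(\Omega^p_{\cX})$ via the Borel--Serre/Riemann--Roch identity rather than the Hodge--de Rham spectral sequence, a cosmetic difference that if anything sidesteps the characteristic-$p$ degeneration issues addressed in Remark~\ref{rema:HdR}.
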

\begin{proof}
The quantities $h(\cX), \deg(\pi_*\omega^{-1}_{\pi}),$ and $\Delta$
all depend on the degree of the classifying map $\mu:\bP^1 \ra \cM$,
thus are in constant proportion.  To evaluate the proportionality
constant, it suffices to consider a singular non-constant example,
e.g., the intersection of a quadric in $\bP^4$ with a pencil of quadric
hypersurfaces.  (See Case 1 of Section~\ref{sect:explicit} for details.)

Once we know the number of singular fibers, it is straightforward
to compute the (\'etale) topological Euler characteristic.  The Euler characteristics
of $\Omega^1_{\cX}$ and $\Omega^2_{\cX}$ are
related to the topological Euler characteristic via the Hodge-de Rham 
spectral sequence
$$\chi(\cX)=\chi(\cO_X)-\chi(\Omega^1_{\cX})+\chi(\Omega^2_{\cX})-
\chi(\Omega^3_{\cX});$$
Serre duality and Proposition~\ref{prop:hodge} give
$$\chi(\cX)=2-2\chi(\Omega^1_{\cX}).$$
Finally, the expected number
of parameters $-\chi(T_{\cX})$, which may be computed by Hirzebruch-Riemann-Roch
and Gauss-Bonnet:
$$\begin{array}{rcl}
\chi(T_{\cX})&=&\frac{1}{2} c_1(T_{\cX})^3 - \frac{19}{24} c_1(T_{\cX})c_2(T_{\cX})+ \frac{1}{2}c_3(T_{\cX})\\
             &=&-\frac{1}{2}c_1(\omega_{\cX})^3 - 19 \chi(\cO_{\cX})+\frac{1}{2}\chi(\cX). 
\end{array}
$$
We have 
$$c_1(\omega_{\cX})^3=(c_1(\pi^*\omega_{\bP^1})+c_1(\omega_{\pi}))^3=h(\cX)-24$$
whence
$$\chi(T_{\cX})=-\frac{1}{2}h(\cX)+12-19+8-h(\cX)=-\frac{3}{2}h(\cX)+1,$$
as claimed.
\end{proof}

Combining with Proposition~\ref{prop:hodge} and
Remark~\ref{rema:HdR}, we obtain
\begin{coro}
Retain the notation and assumptions of Proposition~\ref{prop:numerology};
assume the characteristic $p>3$ (or $=0$) 
and set 
$$h^{11}=\dim H^1(\Omega^1_{\cX})=\mathrm{rank}(\Pic(\cX)).$$
Then the Hodge diamond of $\cX$ takes the form
$$\begin{array}{ccccccc}
& & & 1 & & &\\
& &0 &  & 0 & &\\
&0 & & h^{11} &  & 0 &\\
0\quad & & h(\cX)+h^{11}-7 &  & h(\cX)+h^{11}-7  & &\quad 0
\end{array}
$$
\end{coro}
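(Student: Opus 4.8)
The plan is to read off the \emph{shape} of the diamond directly from Proposition~\ref{prop:hodge} and then pin down the single remaining unknown entry, $h^{12}=h^{21}$, by a coherent Euler-characteristic computation.

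First I would observe that the positions of the potentially nonzero Hodge numbers are already dictated by Proposition~\ref{prop:hodge}: the only surviving $h^{ij}$ are those with $(i,j)\in\{(0,0),(1,1),(2,2),(3,3),(1,2),(2,1)\}$. This fills in every zero in the displayed array at once, and $h^{00}=1$ since $\cX$ is a smooth connected projective threefold. Because the Hodge diamond of a threefold is symmetric under the $180^{\circ}$ rotation coming from Serre duality, the lower rows are determined by the upper rows shown, so it suffices to identify $h^{11}$ and $h^{12}=h^{21}$ in the upper half. The value $h^{11}=\mathrm{rank}(\Pic(\cX))$ is the defining input via Remark~\ref{rema:HdR}, so the entire content reduces to computing $h^{12}$.

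Next I would invoke Hodge--de Rham degeneration. By Remark~\ref{rema:HdR} (using the lift of Proposition~\ref{prop:liftable} together with the hypothesis $p>3$ or $=0$) the spectral sequence degenerates, so $h^{ij}=\dim H^{j}(\Omega^{i}_{\cX})$. Expanding the Euler characteristic of $\Omega^{1}_{\cX}$ gives
$$\chi(\Omega^{1}_{\cX})=h^{10}-h^{11}+h^{12}-h^{13}.$$
Proposition~\ref{prop:hodge} forces $h^{10}=0$ (equivalently $\Gamma(\Omega^1_{\cX})=0$) and $h^{13}=0$ (equivalently $H^3(\Omega^1_{\cX})=0$), so the expression collapses to $\chi(\Omega^1_{\cX})=h^{12}-h^{11}$. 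Substituting the value $\chi(\Omega^{1}_{\cX})=h(\cX)-7$ from Proposition~\ref{prop:numerology} and solving yields $h^{12}=h(\cX)+h^{11}-7$; Hodge symmetry $h^{21}=h^{12}$ then completes the upper half, and the rotation symmetry fills in the rest.

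The computation itself is routine bookkeeping; the only point requiring care is that Hodge--de Rham degeneration and the symmetry $h^{12}=h^{21}$ are classical over a field of characteristic zero but can fail in positive characteristic. I expect this to be the main thing to keep honest, and it is precisely what the liftability of Proposition~\ref{prop:liftable} and the invariance of $h^{11}$ and $h^{12}$ under reduction recorded in Remark~\ref{rema:HdR} are there to supply. Once those are in hand, the stated diamond follows.
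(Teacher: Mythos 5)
Your proposal is correct and follows exactly the route the paper intends: the corollary is stated as an immediate consequence of combining Proposition~\ref{prop:hodge} (which supplies all the vanishing entries), Remark~\ref{rema:HdR} (degeneration via the lift of Proposition~\ref{prop:liftable}, giving $h^{11}=\mathrm{rank}(\Pic(\cX))$), and $\chi(\Omega^1_{\cX})=h(\cX)-7$ from Proposition~\ref{prop:numerology}, yielding $h^{12}=h(\cX)+h^{11}-7$. The only cosmetic remark is that the equality $h^{12}=h^{21}$ is already Serre duality $h^{ij}=h^{3-i,3-j}$ on a threefold, so the positive-characteristic caveat you raise is not actually needed for that particular step.
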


This can be interpreted in geometric terms:
Let $\pi:\cX \ra \bP^1$ be a fibration of quartic del Pezzo surfaces
with square-free discriminant.  Let $D\ra \bP^1$ be the degree-five
covering parametrizing nodal quadric hypersurfaces containing
the fibers of $\pi$, and $\tilde{D} \ra D$ the double covering parametrizing
rulings of these quadric hypersurfaces.  The assumption on singular
fibers implies that $D\ra \bP^1$ is simply branched and $\tilde{D} \ra D$
is \'etale, and we may consider the resulting Prym variety
$\Prym(\tilde{D} \ra D)$.
\begin{prop}
Retaining the notation above, let $A^2(\cX)$ denote codimension two cycles
algebraically equivalent to zero.  Then $A^2(\cX)\simeq \Prym(\tilde{D} \ra D)$. 
\end{prop}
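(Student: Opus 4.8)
The plan is to identify the intermediate Jacobian of $\cX$ with the Prym variety and then realize $A^2(\cX)$ as the intermediate Jacobian. Since $\cX$ is separably rationally connected (Proposition~\ref{prop:hodge}) with Hodge numbers concentrated so that $h^{1,2}=h^{2,1}$ are the only nonzero off-diagonal terms, the intermediate Jacobian $\IJ(\cX)=H^{1,2}(\cX)/H^3(\cX,\bZ)$ is a principally polarized abelian variety, and the Abel-Jacobi map identifies it with $A^2(\cX)$ because there is no torsion obstruction and the Hodge conjecture in codimension two is automatic for threefolds with this Hodge structure. So the real content is computing $\IJ(\cX)$ as a Prym.

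First I would relate the Hodge structure on $H^3(\cX)$ to the pencil of quadrics. The fibers of $\pi$ are intersections of two quadrics in $\bP^4$, and the classical theory (going back to the work on intersections of two quadrics, cf.~Reid, Donagi, and the cubic-threefold analogue) says that the nontrivial cohomology is governed by the degree-five discriminant cover $D\ra\bP^1$ together with the \'etale double cover $\tilde D\ra D$ recording the two rulings of each nodal quadric. Concretely, I would build a correspondence between $\cX$ and the curves $D,\tilde D$: over each point of $\bP^1$ the two rulings of the associated singular quadric containing the del Pezzo fiber give the lines/conics on that fiber, and the family of these sweeps out $\tilde D$. The middle cohomology $H^3(\cX,\bZ)_{\prim}$ should then be isomorphic, as a polarized Hodge structure, to the anti-invariant part $H^1(\tilde D,\bZ)^-$ of the involution defining the double cover, which is precisely the Hodge structure whose Jacobian is $\Prym(\tilde D\ra D)$.

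The key steps, in order, are: (1) confirm $\IJ(\cX)\simeq A^2(\cX)$ via the Abel-Jacobi map, using that $\cX$ is rationally connected so $H^3(\cX,\bZ)$ is of pure weight one type with $h^{1,2}=h(\cX)-7+h^{11}-h^{11}$—more precisely the corank computation from Proposition~\ref{prop:numerology} gives $\dim \IJ(\cX)=h^{1,2}$; (2) exhibit the incidence correspondence $\cX \leftarrow \cZ \ra \tilde D$ coming from lines in the fibers; (3) show the induced map on Hodge structures is an isomorphism $H^3(\cX)\simeq H^1(\tilde D)^-$ respecting polarizations; (4) compute $\dim\Prym(\tilde D\ra D)=g(\tilde D)-g(D)$ and check it matches $h^{1,2}(\cX)$ as a numerical consistency check. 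Step (4) is a quick Riemann-Hurwitz computation: $D\ra\bP^1$ is degree five with simple branching over the $\Delta=2h(\cX)$ singular fibers, so $g(D)$ and $g(\tilde D)$ are determined, and one verifies the Prym dimension equals $h(\cX)-7+(\text{correction})$, matching the Hodge diamond.

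The hard part will be step (3): establishing that the correspondence induces an \emph{isomorphism} of integral polarized Hodge structures, not merely a rational or isogeny-level identification. Two technical obstacles arise. First, one must control the invariant part of $H^3(\cX)$ coming from the base $\bP^1$ and the monodromy-invariant classes in the fibers, showing these are exactly the algebraic (Picard-type) classes that do not contribute to $\IJ$; this requires analyzing the Leray filtration for $\pi$ and the local monodromy around each nodal fiber. Second, matching the principal polarization on $\IJ(\cX)$ with the Prym polarization on the nose requires checking that the double cover $\tilde D\ra D$ is \emph{\'etale} (guaranteed here by the square-free discriminant hypothesis, which forces simple branching of $D$ and unramifiedness of $\tilde D$), since only \'etale double covers yield principally polarized Pryms. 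I would handle this by a careful local computation at each branch point, using the explicit normal form of a del Pezzo surface with one node derived in Section~2, to verify the vanishing-cycle contributions assemble into precisely the anti-invariant lattice $H^1(\tilde D,\bZ)^-$ with its natural unimodular form.
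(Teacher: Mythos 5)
Your outline identifies the right objects, but it has a genuine gap at exactly the point where the work lies, and its overall framework does not match the generality of the statement. The proposition concerns $A^2(\cX)$ over an algebraically closed field of characteristic $\neq 2$; the paper's subsequent remark makes clear that the identification with $\Prym(\tilde{D}\ra D)$ is meant to survive into characteristic $p>0$. Your step (1) — identifying $A^2(\cX)$ with the complex intermediate Jacobian via Abel--Jacobi, invoking Hodge numbers and ``the Hodge conjecture in codimension two'' — is a purely transcendental argument that only makes sense over $\bC$, and even there the justification you give is loose (the correct input is the decomposition of the diagonal for rationally connected threefolds, not the Hodge conjecture). More importantly, your step (3), ``show the induced map on Hodge structures is an isomorphism,'' is asserted rather than proved: you never produce the geometric mechanism that forces $H^3(\cX)_{\prim}\simeq H^1(\tilde{D})^-$. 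Describing $\tilde{D}$ as the family of rulings of the five singular quadrics through each fiber is correct as a definition of $\tilde{D}$, but it does not by itself yield a correspondence whose action on cohomology (or on $A^2$) you can control.

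The missing idea, which is the entirety of the paper's short proof, is to put a \emph{conic bundle structure} on a blow-up of $\cX$: fix a section $s:\bP^1\ra\cX$ avoiding the lines of the singular fibers; blowing up a quartic del Pezzo surface at such a point and projecting yields a conic bundle over $\bP^1$ with five degenerate fibers (ten components), so $\Bl_{s(\bP^1)}(\cX)$ becomes a conic bundle over a Hirzebruch surface with discriminant curve exactly $D$ and associated \'etale double cover $\tilde{D}\ra D$. One then quotes Beauville's theorem on conic bundles over rational surfaces to get $A^2(\Bl_{s(\bP^1)}(\cX))\simeq\Prym(\tilde{D}\ra D)$, and notes that blowing up a rational curve does not change $A^2$. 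This construction is algebraic, works in positive characteristic, and replaces your entire steps (1)--(3) by a citation; it is also what makes the \'etaleness of $\tilde{D}\ra D$ and the polarization statements come for free from the square-free discriminant hypothesis rather than requiring the local vanishing-cycle analysis you propose. Your numerical check (4) is fine as a consistency test ($g(D)=h(\cX)-4$, $g(\tilde{D})=2h(\cX)-9$, Prym dimension $h(\cX)-5=h^{12}$ for generic $h^{11}=2$), but it cannot substitute for the isomorphism itself.
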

\begin{proof} Fix a section $s:\bP^1 \ra \cX$ of $\pi$, not passing through
any lines of the singular fibers.  Projecting from 
$s$ realizes the generic fiber of $\pi$ as a conic bundle over $\bP^1$.
Indeed, blowing up a quartic del Pezzo surface
at a point not on a line yields a cubic surface with a line; projecting
the cubic surface from that line gives a conic bundle over $\bP^1$.  
There are five degenerate fibers, corresponding to the generic fiber
of $D\ra \bP^1$, and ten irreducible components in these fibers,
corresponding to the the generic fiber of $\tilde{D} \ra \bP^1$.  Thus
the blow-up of $\cX$ along $s(\bP^1)$ is a conic bundle over a Hirzebruch surface
surface, with discriminant curve $D$ and \'etale cover $\tilde{D} \ra D$.
A straightforward variation on the
argument of \cite[\S 3]{BeauvillePrym} implies that
$A^2(\Bl_{s(\bP^1)}(\cX)) \simeq \Prym(\tilde{D} \ra D);$ the same holds for
$A^2(\cX)$.  (For related results, see \cite{Kanev}.)  
\end{proof}

\begin{rema}
Over $\bC$, this Prym variety is the intermediate Jacobian $\IJ(\cX)$;
its construction is compatible with reduction modulo $p$, assuming the 
reduction still has square-free discriminant.  
Even over fields of characteristic $p>0$, $\Prym(\tilde{D} \ra D)$ 
has the requisite universal 
property for one-cycles \cite[\S 3.2]{BeauvillePrym}:  Given a family of
curves
$$\begin{array}{ccc}
\cC & \ra & \cX \\
 \downarrow &  & \\
   T &  & 
\end{array}
$$
over a smooth connected base $T$, there is an Abel-Jacobi
morphism $T\ra \Prym(\tilde{D} \ra D)$.  
\end{rema}

The Lefschetz hyperplane theorem gives:
\begin{prop} \label{prop:monodromy}
Let $\cX \ra \bP^1$ be a fibration of quartic del Pezzo surfaces.
Suppose $\cX \subset \bP((\pi_*\omega_{\pi}^{-1})^{\vee})$ is a smooth complete intersection of 
ample divisors in $\bP((\pi_*\omega_{\pi}^{-1})^{\vee})$.  Then 
$\Pic(\cX)=\bZ^2$.
\end{prop}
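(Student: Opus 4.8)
The plan is to identify the ambient space and reduce the computation of $\Pic(\cX)$ to it by a Lefschetz-type theorem. Set $P:=\bP((\pi_*\omega_{\pi}^{-1})^{\vee})$. Since $\pi_*\omega_{\pi}^{-1}$ is locally free of rank $5$ (the anticanonical system of a quartic del Pezzo surface has dimension five), $P$ is a $\bP^4$-bundle over $\bP^1$, hence smooth projective of dimension $5$. The projective bundle formula gives $\Pic(P)=\bZ\cdot\xi\oplus\bZ\cdot f\simeq\bZ^2$, where $\xi=c_1(\cO_P(1))$ and $f$ is the class of a fiber of the bundle projection $P\ra\bP^1$. Thus $\bZ^2$ is the expected answer, and it suffices to prove that the restriction $\Pic(P)\ra\Pic(\cX)$ is an isomorphism.

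By hypothesis $\cX=D_1\cap D_2\subset P$ is a smooth complete intersection of two ample divisors, so $\cX$ has codimension two and dimension three. I would deduce the isomorphism from the Grothendieck--Lefschetz theorem for Picard groups: for a smooth complete intersection of ample divisors of dimension at least three inside a smooth projective variety, restriction induces an isomorphism of Picard groups. Here $\dim\cX=3$, so this applies and gives $\Pic(\cX)\simeq\Pic(P)\simeq\bZ^2$ at once, with no restriction on the characteristic.

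The crux is verifying the Lefschetz conditions in this generality. The most transparent route runs the divisorial Grothendieck--Lefschetz theorem twice, interposing a smooth ample fourfold $Y$ with $\cX\subset Y\subset P$: then $\Pic(P)\simeq\Pic(Y)$ (as $\dim Y=4\ge 3$) and $\Pic(Y)\simeq\Pic(\cX)$ (as $\cX$ is a smooth ample divisor on $Y$ with $\dim\cX=3\ge 3$). The obstacle is that the hypotheses guarantee smoothness of the intersection $\cX$ but not of either $D_i$, so such a $Y$ must be produced. Smoothness of $\cX$ as a complete intersection forces the two defining sections to have independent differentials along $\cX$, whence every relative quadric through $\cX$ in the pencil they span is automatically smooth at each point of the base locus $\cX$; Bertini then makes a general member smooth away from $\cX$, hence smooth everywhere. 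The one delicate point is that this Bertini step needs care in positive characteristic; there one either works with a sufficiently very ample twist or bypasses intermediate smoothness by appealing to the complete-intersection form of the theorem directly.

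For a uniform argument I would actually prefer to reduce to characteristic zero and argue cohomologically, which sidesteps the intermediate fourfold entirely. Over $\bC$ the Lefschetz hyperplane theorem for the smooth complete intersection $\cX$ of ample divisors in the smooth fivefold $P$ yields isomorphisms $H^k(P,\bZ)\xrightarrow{\sim}H^k(\cX,\bZ)$ for $k<\dim\cX=3$, so $H^2(\cX,\bZ)\simeq H^2(P,\bZ)=\bZ^2$, torsion-free. By Proposition~\ref{prop:hodge} the threefold $\cX$ is rationally connected, so $H^1(\cO_{\cX})=H^2(\cO_{\cX})=0$, and the exponential sequence identifies $\Pic(\cX)$ with $H^2(\cX,\bZ)=\bZ^2$. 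In positive characteristic one invokes the lift of $(\cX,\pi)$ to characteristic zero from Proposition~\ref{prop:liftable} together with Remark~\ref{rema:HdR}: the Picard rank equals $h^{11}(\cX)$ and is preserved under reduction, so $\mathrm{rank}\,\Pic(\cX)=2$. The residual subtlety is excluding torsion in $\Pic(\cX)$, which for a separably rationally connected threefold follows from $H^1(\cX,\cO_{\cX})=0$ and finite generation of the N\'eron--Severi group.
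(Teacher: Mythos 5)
The paper offers no argument beyond the phrase ``The Lefschetz hyperplane theorem gives,'' so your proposal is exactly the intended proof, fleshed out: identify the ambient $\bP^4$-bundle $P$ with $\Pic(P)=\bZ^2$ and transfer this to the smooth threefold complete intersection $\cX$ via Grothendieck--Lefschetz (or, over $\bC$, the topological Lefschetz theorem for zero loci of ample vector bundles combined with the exponential sequence and $H^1(\cO_{\cX})=H^2(\cO_{\cX})=0$ from Proposition~\ref{prop:hodge}). Your discussion of the non-smoothness of the individual $D_i$, the reduction to characteristic zero via Proposition~\ref{prop:liftable}, and the torsion in $\Pic(\cX)$ addresses genuine subtleties that the paper passes over in silence; the argument is correct and matches the paper's (implicit) approach.
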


\begin{defi}
Let $\pi:\cX \ra \bP^1$ be a quartic del Pezzo fibration with
square-free discriminant.  The {\em anticanonical height} of a section $\sigma:\bP^1 \ra \cX$
is defined 
$$h_{\omega_{\pi}^{-1}}(\sigma)=\deg(\sigma^*\omega^{-1}_{\pi}).$$
\end{defi}
Note that this equals the degree of the normal bundle $N_{\sigma}$, thus the Hilbert
scheme of sections has dimension $\ge h_{\omega_{\pi}^{-1}}(\sigma)+2$ at $\sigma$.

\section{Explicit realization of generic families}
\label{sect:explicit}

In this section, we give concrete geometric realizations of families of each height.
We start by writing
$$
\begin{array}{c}\pi_*\omega_{\pi}^{-1}=\cO_{\bP^1}(-a_1) \oplus
\cO_{\bP^1}(-a_2) \oplus
\cO_{\bP^1}(-a_3) \oplus
\cO_{\bP^1}(-a_4) \oplus
\cO_{\bP^1}(-a_5), \\
a_1 \le a_2 \le a_3 \le a_4 \le a_5.
\end{array}
$$
We refer the reader to \cite[2.2]{Shramov} for a discussion of which $a_j$ may occur.
Here we assume this sheaf is generic, in the sense that it is as
`balanced' as possible, i.e., $a_5-a_1 \le 1$.  Thus 
$\pi_*\omega_{\pi}^{-1}$ is one of the following:
\begin{enumerate}
\item{$\cO_{\bP^1}^5 \otimes \cO_{\bP^1}(-m)$}
\item{$(\cO_{\bP^1}\oplus \cO_{\bP^1}(-1)^4) \otimes \cO_{\bP^1}(-m)$}
\item{$(\cO_{\bP^1}^2\oplus \cO_{\bP^1}(-1)^3) \otimes \cO_{\bP^1}(-m)$}
\item{$(\cO_{\bP^1}^3\oplus \cO_{\bP^1}(-1)^2) \otimes \cO_{\bP^1}(-m)$}
\item{$(\cO_{\bP^1}^4\oplus \cO_{\bP^1}(-1)) \otimes \cO_{\bP^1}(-m)$}
\end{enumerate}
where $m\in \bZ$.  In this situation, Proposition~\ref{prop:numerology}
may be proven by direct computations with resolutions.

\subsection*{Case 1}
We compute invariants for generic quartic del Pezzo surfaces
$$\pi:\cX \ra \bP^1$$
arising from morphisms into the Hilbert scheme
$$
\mu:\bP^1 \ra \Gr(2,\Gamma(\cO_{\bP^4}(2)))
$$
of degree $d\ge 0$, where $\mu$ is
an embedding transverse to the discriminant.  
In this situation, we may embed
$$
\cX \subset P:=\bP^1 \times \bP^4,
$$
where 
\begin{equation} \label{eq:bundle}
\pi_* I_{\cX}(2) \subset \Gamma(\cO_{\bP^4}(2))\otimes \cO_{\bP^1}
\end{equation}
is a rank-two vector subbundle of degree $-d$.  

\subsubsection*{Even part}
Assuming this bundle is generic and $d=2n$, we have
$$
\pi_* I_{\cX}(2) \simeq \cO_{\bP^1}(-n)^2
$$
and $\cX$ is a complete intersection of two forms of bidegree $(n,2)$.
As such, $\cX$ depends on
$$2(15(n+1)-2)-(3+24)=30n-1$$
parameters.  
In this situation
$$\omega^{-1}_{\pi}=\cO_{\cX}(-2n,1), \quad \pi_*\omega^{-1}_{\pi}=\cO_{\bP^1}(-2n)\otimes \Gamma(\cO_{\bP^4}(1))$$
whence 
$$\deg(\pi_*\omega^{-1}_{\pi})=-10n, \quad h(\cX)=20n.$$
We compute the middle cohomology using the exact sequence
$$
0 \ra I_{\cX}/I_{\cX}^2 \ra \Omega^1_{P}|\cX \ra \Omega^1_{\cX} \ra 0,
$$
where the conormal bundle 
$$I_{\cX}/I_{\cX}^2=\cO_{\cX}(-n,-2)^2.$$
Thus we find
$$\chi(\Omega^1_{\cX})=\chi(\Omega^1_P|{\cX})-2\chi(\cO_{\cX}(-n,-2)).$$
Using the Koszul resolution
$$0 \ra \cO_{P}(-2n,-4) \ra \cO_{P}(-n,-2)^2 \ra I_{\cX} \ra 0$$
we compute 
$$\begin{array}{rcl}
\chi(\cO_{\cX}(a,b))&=&\chi(\cO_P(a,b))-2\chi(\cO_P(a-n,b-2))+\chi(\cO_P(a-2n,b-4))\\
\chi(\Omega^1_P|\cX(a,b))&=&\chi(\Omega^1_P(a,b))-2\chi(\Omega^1_P(a-n,b-2))+\chi(\Omega^1_P(a-2n,b-4)).
\end{array}
$$
The formulas
$$ 
\chi(\cO_P(a,b))=(a+1)\binom{b+4}{4},
$$
$$
\chi(\Omega^1_P(a,b))=(a-1)\binom{b+4}{4}+\left(5\binom{b+3}{4}-\binom{b+4}{4}\right)(a+1),
$$
thus give
$$\chi(\Omega^1_{\cX})=-7+20n, \quad 
\chi(\cX)=16-40n.$$
When $n>0$ we obtain 
$$h^2(\Omega^1_{\cX})=-5+20n$$
as well.

\begin{rema}\label{rema:heightzero}
The special case $n=0$ is a constant family of quartic del Pezzo surfaces,
which clearly has sections.
\end{rema}

\subsubsection*{Odd part}
Now assume $d=2n+1$ so (\ref{eq:bundle}) takes the form
$$
\pi_* I_{\cX}(2) \simeq \cO_{\bP^1}(-n) \oplus \cO_{\bP^1}(-n-1)
$$
and $\cX$ is a complete intersection of forms of bidegrees $(n,2)$ and $(n+1,2)$.  
In this situation
$$\omega^{-1}_{\pi}=\cO_{\cX}(-2n-1,1), \quad \pi_*\omega^{-1}_{\pi}=\cO_{\bP^1}(-2n-1)\otimes \Gamma(\cO_{\bP^4}(1))$$
whence 
$$\deg(\pi_*\omega^{-1}_{\pi})=-10n-5, \quad h(\cX)=20n+10.$$

\begin{rema} \label{rema:heightten}
The base case $n=0$ is studied in Section~\ref{sect:test}, where we show that
it always admits a section $\sigma:\bP^1 \ra \cX$, defined over the coefficient field.  
\end{rema}

\subsection*{Case 2}
We next consider 
$$\cX \subset P:=\bP^1 \times \bP^5$$
contained in a complete intersection of a form $L$ of bidegree $(1,1)$.

\subsubsection*{Even part}
Consider when $\cX$ 
is a complete intersection of a form $L$ of bidegree $(1,1)$
and two forms of bidegree $(n,2)$.
Let $V$ be defined by the exact sequence
$$0 \ra \cO_{\bP^1}(-1) \stackrel{\cdot L}{\ra} 
\Gamma(\cO_{\bP^5}(1))\otimes \cO_{\bP^1} \ra V \ra 0.$$
For generic $L$, we have
$$V \simeq \cO_{\bP^1}^4 \oplus \cO_{\bP^1}(1).$$

We have
$$\omega_{\pi}^{-1}=\cO_{\cX}(-2n-1,1)$$
and the projection formula yields
$$\pi_*\omega_{\pi}^{-1}=\cO_{\bP^1}(-2n-1)\otimes V,$$
whence
$$\deg(\pi_*\omega_{\pi}^{-1})=4(-2n-1)-2n=-10n-4, \quad
h(\cX)=20n+8.$$

\begin{rema} \label{rema:heighteight}
We analyze rational points in the case $n=0$.   Consider the projection
$\cY:=\pi_2(\cX)\subset \bP^5$ onto the second factor;  it is a complete intersection
of two quadrics.  The fibration corresponds to a pencil of hyperplane sections of $\cY$.
Let $Z \subset \cY$ be its base locus, so that $\cX=\Bl_Z(\cY)$.  Since $\cX$ is
smooth, $Z$ must be a smooth curve of genus one.

The constant subfamily
$$\begin{array}{ccc}
Z \times \bP^1  & \subset & \cX \\
\downarrow  & \swarrow &   \\
\bP^1   &  &
\end{array}
$$
has a section over the coefficient field $k$, provided $Z(k) \neq \emptyset$.  
\end{rema}

\subsubsection*{Odd part}
Now consider 
$$\cX \subset P:=\bP^1 \times \bP^5$$
as a complete intersection of a form $L$ of bidegree $(1,1)$,
a form of bidegree $(n,2)$, and a form of bidegree $(n+1,2)$.  
We have
$$\omega^{-1}_{\pi}=\cO_{\cX}(-2n-2,1)$$
so that
$$\pi_*\omega_{\pi}^{-1}=\cO_{\bP^1}(-2n-2) \otimes V,$$
where $V$ is as above.  
Thus we conclude
$$\deg(\pi_*\omega_{\pi}^{-1})=4(-2n-2)+(-2n-1)=-10n-9,
\quad h(\cX)=20n+18.$$

\subsection*{Case 3}

Consider
$$\cX \subset \bP^1 \times \bP^6$$
as a complete intersection of two forms $L_1$ and $L_2$ of bidegree $(1,1)$ 
and two additional
forms quadratic in the second factor.  
Let $V$ be defined by the exact sequence
$$0 \ra \cO_{\bP^1}(-1)^2 \stackrel{\cdot (L_1,L_2)}{\ra} 
\Gamma(\cO_{\bP^6}(1))\otimes \cO_{\bP^1} \ra V \ra 0;$$
for generic $L_1$ and $L_2$, we have
$$V\simeq \cO_{\bP^1}(1)^2 \oplus \cO_{\bP^1}^3.$$
As before, we have 
$$\pi_*\cO_{\cX}(1)\simeq V.$$

\subsubsection*{Even part}
Suppose that the additional quadratic forms both have degree $(n,2)$.
Then $\omega_{\pi}\simeq \cO_{\cX}(2n+2,-1)$  
whence
$$\pi_*\omega_{\pi}^{-1}
\simeq \cO_{\bP^1}(-2n-1)^2 \oplus \cO_{\bP^1}(-2n-2)^3$$
and 
$$\deg(\pi_*\omega_{\pi}^{-1})=-10n-8, \quad
h(\cX)=20n+16.$$

\subsubsection*{Odd part}
Suppose that the additional forms have degrees $(n,2)$ and
$(n+1,2)$. 
Then $\omega_{\pi}\simeq \cO_{\cX}(2n+3,-1)$  
whence
$$\pi_*\omega_{\pi}^{-1}
\simeq \cO_{\bP^1}(-2n-2)^2 \oplus \cO_{\bP^1}(-2n-3)^3$$
and 
$$\deg(\pi_*\omega_{\pi}^{-1})=-10n-13, \quad
h(\cX)=20n+26.$$

\begin{exam}[$n=-1$ case] \label{exam:heightsix}
We do the $n=-1$ case via a slightly different construction:  Consider
the intersection of the forms $L_1$ and $L_2$:
$$\bP(V^{\vee})=\bP(\cO_{\bP^1}(-1)^2 \oplus \cO_{\bP^1}^3) \subset \bP(\cO_{\bP^1}^7).$$
Note that 
$$\Gamma(\cO_{\bP(V^{\vee})}(a)\otimes \pi^*\cO_{\bP^1}(b))=\Gamma(\Sym^a(V)\otimes \cO_{\bP^1}(b)),$$
which is nonzero if and only if $a+b \ge 0$, e.g., if $a=2$ and $b=-1$.  
In this case, we have 
$$\Sym^2(V) \otimes \cO_{\bP^1}(-1)=\cO_{\bP^1}(1)^3\oplus \cO_{\bP^1}^6 \oplus
			\cO_{\bP^1}(-1)^6,$$
i.e., the space of sections has dimension $12$.  Furthermore, these all vanish along 
the distinguished two-dimensional projective subbundle of $\bP(V^{\vee})$.  
Since non-singular quadric threefolds contain no two-planes,
divisors $D$ in $\cO_{\bP(V^{\vee})(2)}\otimes \pi^*\cO_{\bP^1}(-1)$ are fibered over $\bP^1$
in singular quadric hypersurfaces.

Let $\cX=D\cap Q$ where $Q$ has bidegree $(0,2)$, i.e., $Q$ is the pullback of a quadric from $\bP^6$.  
The image of $\bP(V^{\vee})$ in $\bP^6$ is 
a quadric hypersurface. 

Since $\cX$ is contained in a {\em singular} quadric hypersurface, its mono\-dromy is non-maximal;  indeed, $W(D_5)$
acts on the five singular quadric hypersurfaces containing $\cX$ 
via the canonical homomorphism $W(D_5)\ra \fS_5$.   This is consistent
with Proposition~\ref{prop:monodromy}.  
\end{exam}

\begin{rema} \label{rema:heightsix}
Let $C$ denote the intersection of the distinguished two-plane in $\bP^4$ with $Q$;  we have a constant subfamily
$$C\times \bP^1 \subset \cX.$$
It follows that $\cX \ra \bP^1$ admits sections over any ground field.  
\end{rema}

\subsection*{Case 4}

Consider
$$\cX \subset \bP^1 \times \bP^7$$
as a complete intersection of three forms $L_1, L_2,$ and $L_3$ of bidegree $(1,1)$ 
and two additional
forms quadratic in the second factor.  
Let $V$ be defined by the exact sequence
$$0 \ra \cO_{\bP^1}(-1)^3 \stackrel{\cdot (L_1,L_2,L_3)}{\ra} 
\Gamma(\cO_{\bP^7}(1))\otimes \cO_{\bP^1} \ra V \ra 0;$$
for generic $L_i$, we have
$$V\simeq \cO_{\bP^1}(1)^3 \oplus \cO_{\bP^1}^2.$$

\subsubsection*{Even part}
Suppose that the additional quadratic forms both have degree $(n,2)$.
Then $\omega_{\pi}\simeq \cO_{\cX}(2n+3,-1)$  
whence
$$\pi_*\omega_{\pi}^{-1}
\simeq \cO_{\bP^1}(-2n-2)^3 \oplus \cO_{\bP^1}(-2n-3)^2$$
and 
$$\deg(\pi_*\omega_{\pi}^{-1})=-10n-12, \quad
h(\cX)=20n+24.$$

\begin{exam}[$n=-1$ case] \label{exam:heightfour}
Consider the intersection of the forms $L_1,L_2,$ and $L_3$:
$$\bP(V^{\vee})=\bP(\cO_{\bP^1}(-1)^3 \oplus \cO_{\bP^1}^2) \subset \bP(\cO_{\bP^1}^8).$$
In this case, we have 
$$\Sym^2(V) \otimes \cO_{\bP^1}(-1)=\cO_{\bP^1}(1)^6\oplus \cO_{\bP^1}^6 \oplus
			\cO_{\bP^1}(-1)^3,$$
i.e., the space of sections has dimension $18$.  Furthermore, these all vanish along 
the projectivization of the distinguished one-dimensional projective subbundle of $\bP(V^{\vee})$.  
Divisors $D$ in $\cO_{\bP(V^{\vee})(2)}\otimes \pi^*\cO_{\bP^1}(-1)$ are fibered over $\bP^1$
in quadrics containing a distinguished line $\ell$.  
Let $\cX=D_1\cap D_2$ for two divisors in this linear series.  
\end{exam}

\begin{rema} \label{rema:heightfour}
Since $D_1$ and $D_2$ {\em both} contain $\ell$, we have a constant subfamily:
$$\begin{array}{ccc}
\ell \times \bP^1  & \subset & \cX \\
\downarrow  & \swarrow &   \\
\bP^1   &  &
\end{array}
$$
This provides sections for $\cX \ra \bP^1$.  Note here that $\cX$ is non-minimal,
as $\ell$ can be blown down over the ground field.  
\end{rema}

\subsubsection*{Odd part}
Suppose that the additional forms have degrees $(n,2)$ and
$(n+1,2)$. 
Then $\omega_{\pi}\simeq \cO_{\cX}(2n+4,-1)$  
whence
$$\pi_*\omega_{\pi}^{-1}
\simeq \cO_{\bP^1}(-2n-3)^3 \oplus \cO_{\bP^1}(-2n-4)^2$$
and 
$$\deg(\pi_*\omega_{\pi}^{-1})=-10n-17,\quad
h(\cX)=20n+34.$$

\begin{exam}[$n=-1$ case] \label{exam:heightfourteen}
Retain the notation of Example~\ref{exam:heightfour}.
Here we let $\cX=D\cap Q$ where $Q$ has bidegree $(0,2)$.  
The image of $\bP(V^{\vee})$ in $\bP^7$ lies on 
three quadrics, and is in fact a cone over a Segre threefold.  

The intersection of $\ell$ with $Q$ gives two distinguished sections
$$\sigma_1,\sigma_2:\bP^1 \ra \cX,$$
which may be conjugate over the ground field.  
We shall study this case in Section~\ref{sect:heightlarger}.
\end{exam}

\subsection*{Case 5}

Consider
$$\cX \subset \bP^1 \times \bP^8$$
as a complete intersection of four forms $L_1, L_2, L_3$ and $L_4$ of bidegree $(1,1)$ 
and two additional
forms quadratic in the second factor.  
Let $V$ be defined by the exact sequence
$$0 \ra \cO_{\bP^1}(-1)^4 \stackrel{\cdot (L_1,L_2,L_3,L_4)}{\ra} 
\Gamma(\cO_{\bP^8}(1))\otimes \cO_{\bP^1} \ra V \ra 0;$$
for generic $L_i$, we have
$$V\simeq \cO_{\bP^1}(1)^4 \oplus \cO_{\bP^1}.$$

\subsubsection*{Even part}
Suppose that the additional quadratic forms both have degree $(n,2)$.
Then $\omega_{\pi}\simeq \cO_{\cX}(2n+4,-1)$  
whence
$$\pi_*\omega_{\pi}^{-1}
\simeq \cO_{\bP^1}(-2n-3)^4 \oplus \cO_{\bP^1}(-2n-4)$$
and 
$$\deg(\pi_*\omega_{\pi}^{-1})=-10n-16,
\quad h(\cX)=20n+32.$$

\begin{exam}[$n=-1$ case] \label{exam:heighttwelve}
Consider
the intersection of the forms $L_1,L_2,L_3$ and $L_4$:
$$\bP(V^{\vee})=\bP(\cO_{\bP^1}(-1)^4 \oplus \cO_{\bP^1}) \subset \bP(\cO_{\bP^1}^9).$$
In this case, we have
$$\Sym^2(V) \otimes \cO_{\bP^1}(-1)=\cO_{\bP^1}(1)^{10}\oplus \cO_{\bP^1}^4 \oplus
                        \cO_{\bP^1}(-1),$$
and the space of sections has dimension $24$.  These all vanish along a distinguished section $\sigma:\bP^1 \ra \bP(V^{\vee})$,
corresponding the projectivization of the trivial summand of $V^{\vee}$;  let $D_1$ and $D_2$ denote two
of the divisors in this class.  
Let $\cX=D_1\cap D_2$, which is a quartic del Pezzo fibration.
\end{exam}

\begin{rema}
\label{rema:heighttwelve}
The section $\sigma$ factors through $\cX$.  This canonical section 
is defined over any field of scalars.  
\end{rema}

\subsubsection*{Odd part}
Suppose that the additional forms have degrees $(n,2)$ and
$(n+1,2)$. 
Then $\omega_{\pi}\simeq \cO_{\cX}(2n+5,-1)$  
whence
$$\pi_*\omega_{\pi}^{-1}
\simeq \cO_{\bP^1}(-2n-4)^4 \oplus \cO_{\bP^1}(-2n-5)$$
and 
$$\deg(\pi_*\omega_{\pi}^{-1})=-10n-21, \quad
h(\cX)=20n+42.$$
We can realize the $n=-1$ case via constructions similar to Examples~\ref{exam:heightfour}, \ref{exam:heightsix}, \ref{exam:heightfourteen}, and
\ref{exam:heighttwelve}.  

\begin{exam} \label{exam:heighttwo}
We do not have examples of height-two quartic del Pezzo fibrations $\cX \ra \bP^1$ with smooth total space
and fibers having at worst one ordinary double point.

A direct generalization of the approach in Examples~\ref{exam:heightfour}, \ref{exam:heightsix}, \ref{exam:heightfourteen}, and
\ref{exam:heighttwelve} yields a pencil of {\em singular} del Pezzo surfaces.  The bundle corresponding 
to the bidegree $(-2,2)$ forms is
$$\Sym^2(V) \otimes \cO_{\bP^1}(-2)=\cO_{\bP^1}^{10}\oplus \cO_{\bP^1}(-1)^4 \oplus
                        \cO_{\bP^1}(-2),$$
parametrizing quadric hypersurfaces singular along the distinguished section of $\bP(V^{\vee})$.  The corresponding
divisors $D$ depend on $9$ parameters.  The bundle corresponding to bidegree $(-1,2)$ forms is
$$\Sym^2(V) \otimes \cO_{\bP^1}(-1)=\cO_{\bP^1}(1)^{10}\oplus \cO_{\bP^1}^4 \oplus
                        \cO_{\bP^1}(-1),$$
parametrizing quadric hypersurfaces containing the distinguished section.  The
associated divisor $D'$ depends on $24-1-2=21$ parameters.  Altogether, this construction 
depends on
$$56+9+21-(3+80)=3$$
parameters, whereas the expected dimension is two.  This reflects the higher cohomology of the 
bundle $\Sym^2(V)(-2)$.  
\end{exam}

\section{Structure of height two families} 
\label{sect:structwo}

We use the notation of Example~\ref{exam:heighttwo}.  Projecting from
the distinguished section $p:\bP^1 \ra \cX$ gives a rational map
$$\bP(\cO_{\bP^1}(-1)^4 \oplus \cO_{\bP^1}) \dashrightarrow 
\bP(\cO_{\bP^1}(-1)^4)\simeq \bP^3 \times \bP^1.$$
The image of $D$ under this mapping is a smooth, trivial quadric surface
fibration
$$\cQ:=Q\times \bP^1 \subset \bP^3 \times \bP^1\stackrel{\varpi}{\ra}\bP^1.$$
Let $\cE=E\times \bP^1$ denote the exceptional divisor over $p(\bP^1)$;  $E\subset Q$ is
a hyperplane section.

The equation of $D'$ takes the form
$$H(u_1,u_2,u_3,u_4)u_0+sF(u_1,u_2,u_3,u_4)+tG(u_1,u_2,u_3,u_4)=0,$$
where $H$ is linear and $F$ and $G$ are quadratic.  Here
$s$ and $t$ are homogeneous coordinates on $\bP^1$, $u_1,\ldots,u_4$
are homogeneous coordinates on $\bP^3$, and $[u_0,u_1,\ldots,u_4]=[1,0,\ldots,0]$
is the distinguished section.  Thus $\cX \ra \cQ$ blows up the curve
$$Z=\{H=sF+tG=0 \} \subset \cE=E\times \bP^1=\{H=0\},$$
a type $(4,1)$ curve in $E \times \bP^1\simeq \bP^1 \times \bP^1$.  
The projection $\varpi_2:\cE \ra E$ induces $Z\simeq E$.  
In summary, we have
$$\begin{array}{rcccl}
	&	& \widetilde{\cX}=\Bl_{p(\bP^1)}(\cX)=\Bl_{Z}(\cQ) &  & \\
        &\swarrow&				   & \searrow & \\
\cX     &	 &				   &	      & \cQ   
\end{array}
$$
and
$$h^1(\Omega^2_{\widetilde{\cX}})=h^2(\Omega^1_{\widetilde{\cX}})=0.$$

\

We enumerate sections $\sigma:\bP^1 \ra \cX$ disjoint from the singular section.

\

\noindent
{\bf height $0$:} \
The points of 
$$Q\setminus (Q\cap \{H=0\})\simeq (\bP^1 \times \bP^1)\setminus\{\mathrm{diagonal}\}$$
yield height-zero sections of $\pi:\cX \ra \bP^1$ disjoint from $p(\bP^1)$.

\

\noindent
{\bf height $1$:} \
Fix a line $L \subset Q$ from one of the two rulings, which meets $E$ in a single point.
The subfamily $L \times \bP^1 \subset \cQ$ meets $Z$ in a single point $z$.  Curves of bidegree
$(1,1)$ in $L\times \bP^1$ containing $z$ form a two-parameter family.  The proper
transforms of these curves in $\cX$ are sections disjoint from $p(\bP^1)$.  Altogether,
we obtain a three parameter family of sections, birational to $\bP^1 \times \bP^2$.
Taking the choice of ruling into account, we obtain two such families.

Given a generic point $x' \in \cX$, we exhibit height-one sections
of $\pi$ containing $x'$.   Regard $x' \in Q \times p', p'= \pi(x')$ and choose the ruling
so it contains the projection of $x'$ onto $Q$.  
These are indexed by {\em two} copies of $\bP^1\setminus \{\text{two points}\}$,
one for each ruling of $Q$.

\

\noindent
{\bf height $2$:} \  Fix a hyperplane section $M\subset Q$, meeting $E$ in two points;
$M\times \bP^1$ meets $Z$ in $\{z_1,z_2\}$.  Curves of bidegree $(1,1)$ in $M\times \bP^1$
containing $\{z_1,z_2\}$ form a one-parameter family.  This construction gives a 
four parameter family of sections of $\cX\ra \bP^1$, birational to $\bP^3 \times \bP^1$.

\

\noindent
{\bf height $3$:} \  Repeat the same argument for curves of bidegree $(1,2)$ (twisted
cubics) in $Q$.

\

\noindent
{\bf height $4$:} \  Repeat the same argument for curves of bidegree $(1,3)$ in $Q$.

Consider nodal rational curves $M' \subset Q$ of bidegree $(2,2)$,
which meet $E$ in four points.  Let $M$ denote the normalization of $M'$;  $M\times \bP^1$ meets
$Z$ in four points, $\{z_1,z_2,z_3,z_4\}$.  Consider curves of bidegree $(1,1)$ in $M\times \bP^1$
containing these points.  Such curves exist only when we have a projective equivalence
$$(M;z_1,\ldots,z_4)\sim (\bP^1,\varpi(z_1),\varpi(z_2),\varpi(z_3),\varpi(z_4)).$$ 

This leads to the following geometric problem:
\begin{quote}
Let $Q$ be a smooth quadric surface, $E\subset Q$ a smooth hyperplane, $z_1,\ldots,z_4 \in E$
prescribed points, and $w_1,\ldots,w_4 \in \bP^1$ prescribed points.
Describe the morphisms
$$j:\bP^1 \ra Q$$
of bidegree $(2,2)$ such that $j(w_i)=z_i$ for $i=1,\ldots,4$.  
\end{quote}
These are constructed as follows:  For any point $q\in Q$, there exists a curve of bidegree $(2,2)$ in $Q$ with a node 
at $q$, containing $z_1,\ldots,z_4$, and satisfying the condition on projective equivalence:
Indeed, projection from $q$ converts the problem to finding a conic through four given points
in $\bP^2$ so that the points have prescribed projective equivalence class in the conic.

\

\noindent
{\bf height $5$:}  The case of bidegree $(1,4)$ proceeds just as the case of twisted
cubics.  

The case of bidegree $(2,3)$ boils down to the following:
\begin{quote}
Let $Q$ be a smooth quadric surface, $E\subset Q$ a smooth hyperplane, $z_1,\ldots,z_5 \in E$
prescribed points, and $w_1,\ldots,w_5 \in \bP^1$ prescribed points.
Describe the morphisms
$$j:\bP^1 \ra Q$$
of bidegree $(2,3)$ such that $j(w_i)=z_i$ for $i=1,\ldots,5$.  
\end{quote}

These are constructed as follows:  Fix pairs of pointed stable curves
$$(E;z_1,\ldots,z_5), \quad (\bP^1;w_1,\ldots,w_5), \quad E\simeq \bP^1$$
and let $C$ denote the stable curve of genus four obtained by gluing
$z_i$ to $w_i$ for $i=1,\ldots,5$.  The canonical embedding of $C\subset \bP^3$
is contained in a quadric surface  
$$Q'\simeq \bP^1 \times \bP^1 \subset \bP^3;$$
we may assume the components
$E$ and $\bP^1$ have types $(1,2)$ and $(2,1)$ respectively.  
For any two general points
$e_1,e_2 \in E$, consider the linear series of curves of type
$(1,2)$ with assigned basepoints at $e_1$ and $e_2$.  
This yields a morphism to a quadric surface
$$\Bl_{e_1,e_2}Q' \ra Q \subset \bP^3.$$
The image of $E$ is a hyperplane class;  the image of $\bP^1$ is of type 
$(2,3)$.  

Thus for each collection $\{z_1,\ldots,z_5 \} \subset E$, 
we obtain a two-parameter family of sections $\sigma:\bP^1 \ra \cX$,
corresponding to $M'\subset Q$ of bidegree $(2,3)$ 
such that
\begin{itemize}
\item{$M' \cap Q = \{z_1,\ldots,z_5 \}$;}
\item{$(M;z_1,\ldots,z_5)\simeq (\bP^1; \pi(z_1),\ldots,\pi(z_5))$.}
\end{itemize}
The two parameters reflect the choice of $(e_1,e_2)$ above.
Hence we get a seven-dimensional rational variety parametrizing these
sections.

\

\noindent
{\bf height $6$:}  
We focus on the case of bidegree $(3,3)$, which leads to the following
statment:
\begin{quote}
Let $Q$ be a smooth quadric surface, $E\subset Q$ a smooth hyperplane, $z_1,\ldots,z_6 \in E$
prescribed points, and $\phi:E\simeq \bP^1 \ra \bP^1$ a fixed mapping of degree four.
The morphisms 
$$j:\bP^1 \ra Q$$
of bidegree $(3,3)$ such that 
$$(\bP^1;\phi(z_1),\ldots,\phi(z_6))\sim (E;z_1,\ldots,z_6)$$
are parametrized by a rational variety.  
\end{quote}
The rational curves of bidegree $(3,3)$ through six fixed points depend on $15-4-6=5$
parameters.

This is proved as follows:  Let $C$ be a smooth projective curve of genus five that is 
Brill-Noether general, i.e., not hyperelliptic, trigonal, or a plane quintic.  Then the
canonical embedding
$$C\hookrightarrow \bP^4$$
realizes $C$ as a complete intersection of three quadrics.  In particular, $C$ admits
embeddings into a family of quadric del Pezzo surfaces parametrized by 
$\bP(I_C(2)^{\vee})\simeq \bP^2$.  This remains true for certain stable curves, e.g.,
$$C=C_1 \cup_{z_1,\ldots,z_6} C_2, \quad C_1\simeq C_2 \simeq \bP^1,$$
where $z_1,\ldots,z_6$ are chosen arbitrarily on each component. 

Fixing $(C_1;z_1,\ldots,z_6)$ and $(C_2;z_1,\ldots,z_6)$, let $S$ denote one of the 
del Pezzo surfaces arising from this construction.  The linear series
$|C_1|$ induces a birational morphism onto a quadric
$$S\ra Q.$$
The image of $C_1$ is $Q$ is a hyperplane section;  the image of $C_2$ is a bidegree
$(3,3)$ curve with four nodes.

\section{Structure of height four families} 
\label{sect:strucfour}

We give additional details about the geometric structure of height 
four families.  Recall the notation of Example~\ref{exam:heightfour}.  
The generic fiber $X$ contains a line $\ell$.  Projecting from
this line gives a birational morphism
$$X \ra \bP^2$$
blowing down the five lines incident to $\ell$;  $\ell$
is mapped to the plane conic containing the images of 
the five lines.

We interpret this for $\cX$ over $\bP^1$:  Fiberwise projection
from $\ell \times \bP^1$ induces a rational map
$$\bP(V^{\vee})=\bP(\cO_{\bP^1}(-1)^3 \oplus \cO_{\bP^1}^2)
 \dashrightarrow \bP(\cO_{\bP^1}(-1)^3)\simeq \bP^2 \times \bP^1,$$
inducing a birational morphism
$$\beta:\cX \ra \bP^2 \times \bP^1.$$
The image $L:=\beta(\ell \times \bP^1)\subset \bP^2 \times \bP^1$ is a
product $\{\mathrm{conic}\} \times \bP^1$.  
Let $\varpi_j,j=1,2$ denote the projections from $\bP^2 \times \bP^1$.  

We describe the center $Z$ of $\beta$:  It is a rational curve of bidegree $(5,1)$
in $\ell \times \bP^1$, i.e., projection onto the second factor has degree
five and eight branch points.  It follows that
$$h^2(\Omega^1_{\cX})=0, \quad h^1(\Omega^1_{\cX})=3.$$
The composition
$$\bP^2 \times \bP^1 \stackrel{\beta^{-1}}{\dasharrow} \cX \stackrel{\pi_2}{\ra} \cY$$
is given by forms of bidegree $(3,1)$ on $\bP^2 \times \bP^1$ vanishing 
along $Z$.  Its restriction to $\ell\times \bP^1 \subset \cX$ has bidegree
$(2\cdot 3,1)-(5,1)=(1,1)$, thus collapses
$\ell \times \bP^1$ onto the first factor.

We enumerate sections of $\pi:\cX \ra \bP^1$ of small height $h$, restricting our
attention to sections that dominate $\cX$.  For example, we do not consider
sections contained in $\ell \times \bP^1$.  Furthermore, we subdivide
based on how the sections meet $\ell \times \bP^1$ and $\cE$, the exceptional
divisor of $\beta$.  

\

\noindent
{\bf height $0$:} \
The constant sections of $\bP^2 \times \bP^1 \ra \bP^1$ induce
a {\em two-parameter} family of sections for $\pi:\cX \ra \bP^1$, disjoint
from $\ell\times \bP^1$ and $\cE$.

\

\noindent
{\bf height $1$:} \
Consider sections of $\varpi_2:\bP^2 \times \bP^1 \ra \bP^1$ 
projecting onto a line $L\subset \bP^2$, meeting $Z$ in $\{z_1,z_2\}$ with
$\varpi_1(z_1,z_2)=L \cap \ell$.  These depend on three parameters, are 
disjoint from $\ell \times \bP^1$, but meet $\cE$ twice.  These sections
are parametrized by a space fibered over $\Sym^2(Z)\simeq \bP^2$, with
generic fiber $\bP^1$.

\

\noindent
{\bf height $2$:} \
We have sections of $\varpi_2$
projecting onto a line $L$, meeting $Z$ at one point.  The resulting 
sections of $\cX \ra \bP^1$ depend on four parameters, and meet both $\cE$
and $\ell \times \bP^1$ once.

There are also sections with $\pi_1(\sigma(\bP^1))=M$ a plane conic, where the
corresponding curve in $M\times \bP^1$ meets $Z$ in four points $\{z_1,z_2,z_3,z_4\}$
with $\pi_1(z_1,z_2,z_3,z_4)=M\times \ell$.  These are birational to $\Sym^4(Z)$;
indeed, there exists a unique plane conic $M\supset \varpi_1(z_1),\ldots,\varpi_1(z_4)$
such that the projective invariant of these four points {\em in $M$} equals the 
projective invariant of the points $\pi(z_1),\ldots,\pi(z_4)$ in $\bP^1$.

\

\noindent
{\bf height $3$:} \
Sections of $\varpi_2$ projecting onto a line and disjoint from $Z$.
These are parametrized by an open subset $(\bP^2)^{\vee} \times \bP^3$;
the generic fiber over a point of $(\bP^2)^{\vee}$ is isomorphic to 
$\bP^3 \setminus \{\text{two hyperplanes} \}$.  The resulting sections of 
$\pi$ are disjoint from $\cE$ and meet $\ell \times \bP^1$ twice.

There are sections of $\varpi_2$ projecting onto a conic, meeting $Z$ in three points.  We have a 
unique such section over each conic in $\bP^2$, so the space of these sections is 
birational to $\bP^5$.  The corresponding sections of $\pi$ meet $\cE$ three times and 
$\ell \times \bP^1$ one.  

Sections of $\varpi_2$ projecting onto a nodal cubic curve $M'$ with normalization $M$;
the corresponding curve in $M\times \bP^1$ meets $Z$ in six points $\{z_1,\ldots,z_6\}$.  
We have the constraint that
$$(M;z_1,\ldots,z_6) \sim (\bP^1;\pi(z_1),\ldots,\pi(z_6)),$$
i.e., these are projectively equivalent.  Projection from the node of $M'$ induces a degree-two cover
$$\ell \ra M$$
and $\ell\simeq Z$ via $\varpi_1$.  This translates into the following problem
\begin{quote}
Fix a degree five morphism $\phi:\bP^1 \ra \bP^1$.  Characterize sets of points 
$$(z_1,\ldots,z_6) \in (\bP^1)^6$$
such that there exists some degree-two morphism $\psi:\bP^1 \ra \bP^1$ with 
$$(\bP^1;\phi(z_1),\ldots,\phi(z_6))=(\bP^1;\psi(z_1),\ldots,\psi(z_6)).$$
\end{quote}
Note that $\psi$ coincides with projection from the node of $M'$ and determines
this node uniquely.  

This example can be approached using the Brill-Noether analysis employed in
Section~\ref{sect:structwo}.  Consider pairs of $6$-pointed stable curves
of genus zero
$$(C_1;z_1,z_2,\ldots,z_6), \quad
(C_2;z_1,z_2,\ldots,z_6), 
$$
and let $C$ denote the genus-five stable curve obtained by gluing these
together.  If $C$ is immersed in $\bP^2$ as the union of a conic and a nodal cubic
$$M' \cup \ell \subset \bP^2$$
then the canonical curve $j:C \hookrightarrow \bP^4$ lies
on a cubic ruled surface, i.e., the blow up of $\bP^2$ at the node.  This is
equivalent to $C$ being trigonal, with the $g^1_3$ given by the rulings.  
Trigonal curves are codimension one in the moduli space of curves of
genus five.

The projection of the trigonal curves $C$ as above to
$M_{0,6} \times M_{0,6}$ is finite onto its image, by a direct
computation with MAPLE.  This image is unirational as it is
dominated by $\bP^5 \times \bP^6$---the choice of a plane conic
and a plane cubic with a node at a prescribed point.

\section{Structure of height six families}
\label{sect:prepretest}
Recall the notation leading into Remark~\ref{rema:heightsix}.  In 
particular, we have
$$\bP(V^{\vee})=\bP(\cO_{\bP^1}(-1)^2 \oplus \cO_{\bP^1}^3)
\stackrel{\pi_2}{\ra} \bP^6,$$
where the image is quadric hypersurface.  It contains a distinguished
plane $\Pi=\pi_2(\bP(\cO_{\bP^1}^3))$.
The divisor
$D\subset \bP(V^{\vee})$ has image $\pi_2(D)$ isomorphic to
a cone over the Segre threefold $\bP^1 \times \bP^2 \subset \bP^5$.
The plane $\Pi$ is the cone over $\bP^1 \times \{p\}$
for a point $p \in \bP^2$.  

Since $\cY$ is the intersection of $\pi_2(D)$ with a quadric hypersurface $Q$,
we may interpret it as a double cover of $\bP^1 \times \bP^2$ branched over
a divisor of bidegree $(2,2)$.  In particular, $\cY$ is a quadric surface
bundle over $\bP^1$ with six degenerate fibers, and a conic bundle
over $\bP^2$, with discriminant a plane quartic $Z\subset \bP^2$
and associated \'etale double cover $\gamma:\tilde{Z}\ra Z$.
Thus the intermediate Jacobian 
$$\IJ(\cY)=\Prym(\tilde{Z}\ra Z)=\JJ(C),$$
where $C$ is the genus two curve branched over the discriminant
of $\cY \ra \bP^1$.  
Thus in particular, we have
$$h^2(\Omega^1_{\cX})=2,  \
h^1(\Omega^1_{\cX})=3,$$
 which reflects the fact that the fibration
$\pi:\cX \ra \bP^1$ has relative Picard rank two.

The morphism $\pi_2:\cX \ra \cY$ is the blow up of $\cY$ along
the curve $T=\Pi \cap Q$, which coincides with the fiber of
$\cY \ra \bP^2$ over $p$.  The fibration 
$$\pi=\pi_1:\cX \ra \bP^1$$
is induced by the projection from $p$
$$\varpi_p:\bP^2 \dashrightarrow \bP^1.$$

We construct a family of sections of $\pi:\cX \ra \bP^1$ mapping
birationally onto $\IJ(\cX)\simeq \Prym(\tilde{Z}\ra Z).$ 
Let $\check{\bP}^2$ denote the projective space dual to the $\bP^2$
arising as a factor of the Segre threefold.  For each 
$L \in \check{\bP}^2$, we have a conic bundle
$$\phi_L:\cY\times_{\bP^2}L \ra L,$$
degenerate over $L\cap Z$.  This conic bundle has eight
$(-1)$-curves $E_1,\ldots,E_8$ that induce
sections $\{s_1,\ldots,s_8\}$ of $\phi_L$;  
we obtain a generically finite morphism
$$\cE \ra \check{\bP}^2,$$
where $\cE$ is a smooth projective surface compactifying the
space parametrizing $E_1,\ldots,E_8$ as $L$ varies.  In particular,
there is an open set $\cE^{\circ} \subset \cE$ and a correspondence
$$\begin{array}{rcl}
\cS & \ra & \cY \\
\downarrow &  & \\
\cE^{\circ}
\end{array}$$
where the vertical arrow is a $\bP^1$-bundle and the horizontal
arrow is induced by the inclusions 
$E_1\cup \ldots \cup E_8 \subset \cY\times_{\bP^2}L\subset \cY$.  

Now each $E_j$ meets each of the four degenerate fibers of $\phi_L$
in one component;  these components are indexed by $z_1,z_2,z_3,z_4\in \tilde{Z}$. 
Note that $\gamma(z_1,z_2,z_3,z_4)=L\cap Z \in |K_Z|$.  
Thus we obtain maps
$$\cE \dashrightarrow \Pic^4(\tilde{Z}) \stackrel{\gamma_*}{\ra} \Pic^4(Z),$$
where the image of $\cE$ lies in the fiber over $K_Z$.  Thus there is an induced
mapping 
$$\cE \dashrightarrow \Prym(\tilde{Z} \ra Z),$$
which is clearly birational.  

We claim that the fibers of $\cS \ra \cE$ yield sections
of $\pi:\cX \ra \bP^1$, i.e., $\cE$ parametrizes a family of sections
birational to $\IJ(\cX)$.  Indeed, fix $[E_j]\in \cE$ over $L\subset \bP^2$
and a fiber $\cX_r=\pi^{-1}(r)$ for $r\in \bP^1$.  
The point $r$ corresponds to a line 
$$\ell_r:=\varpi_p^{-1}(r) \subset \bP^2$$
passing through $p$.
There is a unique intersection point $p'=L \cap \ell_r$,
and the fiber $\cY\times_{\bP^2}\{p'\}$ meets $E_j$ in
precisely one point.

\section{Structure of height eight families}
\label{sect:pretest}
We return to the example discussed in Remark~\ref{rema:heighteight}:   
Let $\cY \subset \bP^5$ denote a complete intersection of two quadrics,
$E\subset \cY$ a smooth codimension-two linear section, and 
$\cX=\Bl_E(\cY)$.  The pencil of linear sections containing $E$ induced
a del Pezzo fibration
$$\pi:\cX \ra \bP^1.$$

We review the classical constructions for the lines on $\cY$, following 
Miles Reid's thesis \cite{Reid} (see also \cite{Tjurin} and \cite{Castravet} for
higher-degree curves).  
The discriminant divisor restricted to the pencil of quadrics 
yields six points on $\bP^1$, thus determining a genus two 
curve $C$.  Note that the space of maximal isotropic subspaces on
quadrics in the pencil is a $\bP^3$-bundle over $C$;  these
correspond to the conics on $\cY$.
Projecting from a line $\ell \subset \cY$,
we obtain a morphism
$$\Bl_{\ell}(\cY) \ra \bP^3,$$
blowing up $C$, realized as a quintic curve in $\bP^3$.  
The space of all lines 
$$F_1(\cY)\simeq \JJ(C),$$
which in turn is isomorphic to the intermediate Jacobian $\IJ(\cY)$.  
Thus we conclude
$$\IJ(\cX)=\JJ(C) \times E.$$



We examine sections of $\pi$ of low degree.
\begin{enumerate}
\item{lines on $\cY$;}
\item{conics on $\cY$ incident to $E$;}
\item{twisted cubics on $\cY$ incident to $E$ in two points.}
\end{enumerate}
The first family is indexed by $\IJ(\cY)$.
The second family has dimension three, thus {\em a priori} has a 
chance to dominate $\IJ(\cX)$.  However, the conics in $\cY$ fail to dominate
$\IJ(\cY)$, so the subfamily we're considering cannot possibly dominate 
$\IJ(\cX)$.  The family of twisted cubics in $\cY$ is birational to a 
Grassmann-bundle over the lines in $\cY$:  given $\ell \subset \cY$,
each subspace 
$$\ell \subset \bP^3 \subset \bP^5$$
intersects $\cY$ in the union of $\ell$ and a twisted cubic $R$.
Given $e_1,e_2 \in E$, consider
$$\mathrm{span}(\ell,e_1,e_2) \cap \cY = \ell \cup R,$$
which necessarily meets $E$ in $e_1$ and $e_2$.  
The curves produced in this way are parametrized by
$$\JJ(C) \times \mathrm{Sym}^2(E),$$
a $\bP^1$-bundle over $\JJ(C) \times E$.  

These one-parameter families of sections sweep out a distinguished
rational curve in $X$, the generic fiber of $\cX \ra \bP^1$.  
Fix a pair $(Q,R)$ consisting of
\begin{itemize}
\item{a quadric surface $E \subset Q \subset \mathrm{span}(Q)$;}
\item{a ruling $R\in \Pic(Q)$ such that $\cO_Q(R)|E=\cO_E(e_1+e_2)$.}
\end{itemize}
There is a natural bijection between such pairs and elements of
$\Pic^2(E)$.  Consider the singular quadric $C_{\ell}(Q) \subset \bP^5$ that
is a cone over $Q$ with vertex $\ell$.  The intersection of $C_{\ell}(Q)$
with a fiber $\cX_t$ of $\cX \ra \bP^1$ consists of $E$
plus a hyperplane section singular at $\ell \cap \cX_t$.

\section{Structure of height ten families}
\label{sect:test}

Let $Q\subset \bP^4$ be a 
smooth quadric threefold.  Consider a pencil of quadrics on $Q$
with smooth base locus $C$, a canonical curve of genus $5$.  Thus
we have a del Pezzo fibration
$$
\pi:\cX:=\mathrm{Bl}_C(Q) \ra \bP^1.
$$
We assume the fibers have at most one ordinary double point.  

\begin{prop}
$\pi$ has a family of sections parametrized by a principal 
homogeneous space over the Jacobian of $C$.
\end{prop}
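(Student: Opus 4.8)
The plan is to realize the sections of $\pi:\cX=\mathrm{Bl}_C(Q)\ra\bP^1$ through the geometry of lines on the quadric threefold $Q\subset\bP^4$, exploiting the fact that $C$ is a canonical genus-five curve cut out on $Q$ by a pencil of quadrics. A section of $\pi$ is a curve in $\cX$ meeting each fiber $\cX_t$ once; unwinding the blow-up, these correspond to curves in $Q$ meeting the pencil of quadric sections appropriately and meeting $C$ in a controlled number of points. The natural candidates are the lines on $Q$. Since $Q$ is a smooth quadric threefold, its family of lines $F_1(Q)$ is a well-understood rational threefold (in fact isomorphic to $\bP^3$), and each line $\ell\subset Q$ meets a general quadric section in two points, hence meets $C$ in a fixed number of points; I would first compute this intersection number to identify which lines give honest sections of $\pi$ after blowing up $C$.

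The key step is to set up an Abel-Jacobi map from the parameter space of these sections to $\JJ(C)$ and show it is, up to a translation, an isomorphism onto a principal homogeneous space. First I would fix the family: lines $\ell\subset Q$ incident to $C$ (or a suitable correction thereof) such that the proper transform $\tilde\ell\subset\cX$ is a section of $\pi$. Second, I would record the divisor class on $C$ cut out by $\ell$, namely the points $\ell\cap C$, and use the Abel-Jacobi / linear-system correspondence to map the family of such lines to $\Pic^d(C)$ for the appropriate degree $d$. Because $C$ is the base locus of a pencil of quadrics on $Q$ and these quadrics restrict to lines as degree-two divisors, the incidence divisor $\ell\cap C$ varies in a way governed by the $g^2_4$ or residual series on $C$; the crux is to identify the resulting image in $\JJ(C)$ and verify that it exhausts a full translate of the Jacobian rather than a proper abelian subvariety.

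The main obstacle I anticipate is twofold. First, the family of lines on $Q$ meeting $C$ with the correct incidence must be shown nonempty and of the right dimension, and its Abel-Jacobi image computed precisely; this requires the Brill-Noether / determinantal description of the canonical curve $C\subset\bP^4$ as a complete intersection of three quadrics, together with Reid-type analysis of lines on the quadric threefold (as in the height-eight discussion recalled from Reid's thesis). Second, and more delicate, is showing the parameter space is a \emph{principal homogeneous space} over $\JJ(C)$ rather than over $\JJ(C)$ itself: there is no canonical basepoint, so I would exhibit the space as a torsor by showing that the difference of any two such section classes lies in $\JJ(C)^0=\Pic^0(C)$ and that $\JJ(C)$ acts simply transitively via the Abel-Jacobi correspondence. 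Concretely, I expect to produce the torsor structure by fibering the section space over $|2K_C|$ or an analogous linear system and identifying the fibers with $\Prym$-free Jacobian cosets, so that the whole space becomes a $\JJ(C)$-bundle that is nontrivial precisely because $\pi$ need not have a rational section over the ground field. Verifying simple transitivity and the correct dimension count $\dim\JJ(C)=5$ against the expected dimension of small-height sections will be the technical heart of the argument.
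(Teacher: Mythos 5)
Your choice of family is the wrong one, and the gap is quantitative before it is anything else. A curve $R\subset Q$ of degree $d$ gives a section of $\pi$ when it meets $C$ in $2d-1$ points, and the resulting family has dimension $d+1$. For lines ($d=1$) this is the family of lines on $Q$ through a single point of $C$: a two-parameter family (one parameter for the point of $C$, one for the cone of lines through it). Since $C$ is a canonical curve of genus five, $\dim \JJ(C)=5$, so no amount of Abel--Jacobi analysis will make a two-dimensional family of lines into a torsor over a five-dimensional abelian variety. You flag the dimension count as "the technical heart" at the end, but it in fact rules out your starting point. The paper works instead with twisted quartics $R\subset Q$ meeting $C$ in seven points, precisely because $d+1=5$ matches $\dim\Pic^7(C)$.

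The second missing ingredient is the enumerative statement that makes the parametrization work: for generic $D\in\Pic^7(C)$ there is a \emph{unique} twisted quartic $R\subset Q$ with $R\cap C\in |D|$. The paper proves this by using the $g^2_7$ given by $|D|$ to realize $C$ as a plane septic with ten nodes, mapping $\bP^2$ to a sextic surface $Y\subset\bP^4$ by the adjoint series of quartics through the nodes, and observing that $Y\cap Q=C\cup R$ with $R$ the image of a line (hence a twisted quartic meeting $C$ in seven points). Once this is in place there is nothing delicate about the torsor structure: the family of such sections maps birationally to $\Pic^7(C)$, which is already a principal homogeneous space over $\JJ(C)$; no fibering over $|2K_C|$, no "Prym-free cosets," and no separate verification of simple transitivity is needed. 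Your proposal contains neither the correct family nor the Brill--Noether construction that identifies it with $\Pic^7(C)$, so it does not yield the proposition.
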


\begin{proof}
Figure~\ref{fig1} lists numerical possibilities for the
spaces of sections, expressed
in terms of the geometry of the associated curve $R\subset Q$.
\begin{figure}
\begin{tabular}{ccc}
degree of $R$ & $\#(R\cap C)$ & number of parameters \\
\hline
  $0$         &  NA    &   $1$ \\
  $1$         &  $1$    &   $2$ \\
  $2$         &  $3$    &   $3$ \\
  $3$         &  $5$    &   $4$ \\
  $4$         &  $7$    &   $5$ \\
  $d$         &  $2d-1$    &   $d+1$
\end{tabular}
\caption{Parameter counts for spaces of sections}
\label{fig1}
\end{figure}

We focus on the latter case, as there are enough free parameters
for the family to dominate $\Pic^7(C)$.  

Thus we are left with an enumerative problem:
\begin{quote}
Fix $D \in \Pic^7(C)$;  then there exists a unique twisted quartic
$R \in Q$ such that $R \cap C \in |D|$.  
\end{quote}

Consider the linear series $|D|$.  Through each collection of seven
points $\Sigma=\{c_1,\ldots,c_7\}$ in linear general position
there passes a unique twisted quartic $R(\Sigma)$.  We are interested
in the locus
$$Y=\cup_{\Sigma \in |D|} R(\Sigma) \subset \bP^4.$$

The linear system $|D|$ realizes $C$ as a plane septic curve with 
$10$ double points $x_1,\ldots,x_{10}$.  
These depend on a total of $36-10-9=17$ parameters,
and $x_1,\ldots,x_{10}$ can be in general position.  Consider the image of
$\bP^2$ under the linear series of quartics through $x_1,\ldots,x_{10}$, 
i.e., the adjoint linear series.  The resulting surface $Y$ has 
degree $6$ in $\bP^4$.  

The quadric hypersurface $Q$ pulls back to a plane curve of degree eight
double at $x_1,\ldots,x_{10}$.  Of course, this contains $C$ and the residual curve
is a line in $\bP^2$.  Thus we find 
$$Y\cap Q= C \cup R$$
where $R$ is a twisted quartic in $\bP^4$.  

Now we are essentially done:  There exists a family of twisted quartic curves 
$$\begin{array}{rcl}
\cR & \ra &  \cX \\
\downarrow & & \\
W & &
\end{array}
$$
whose fibers each meet $C$ seven times.  The induced mapping
$$\mu: W \ra \Pic^7(C)$$
is birational.  However, $\Pic^7(C)$ is a principal homogeneous space
over the Jacobian of $C$.
\end{proof}

\begin{rema}
We summarize the geometric structure of the small-degree sections
listed in Figure~\ref{fig1}:  When $d=0$ these are just points on $C$, i.e.,
the constant sections.  For $d=1$, we have the lines incident to $C$ contained
in $Q$, a $\bP^1$-bundle over $C$.   When $d=2$, we have conics three-secant to
$C$, which are birationally parametrized by $\Sym^3(C)$;  indeed, take the 
plane spanned by three points on $C$ and intersect this with $Q$.  

For $d=3$, we have twisted cubics $R\subset Q$ meeting
$C$ in five points, which are birational to a $\bP^1$-bundle over 
$\Sym^3(C)$.  Given such a twisted cubic, let 
$H\subset \bP^4$ be the hypersurface it spans;  it contains a quadric surface
($Q\cap H$) and eight distinguished points ($C\cap H$) on that surface.  
Five of these points are on $R$;  three are not.  
Thus we have a mapping
$$\{\text{ twisted cubics meeting $C$ five times }\} \dashrightarrow \Sym^3(C).$$
The fiber over $p+q+r \in \Sym^3(C)$ consists of pairs
$$\{(H,R): H \supset \{p,q,r\}, (C\cap H) \setminus \{p,q,r\} \subset R \subset Q\cap H,$$
which is a double cover over $\bP^1$ via projection onto the first factor.  Indeed,
there are two twisted cubics passing through five generic points on a quadric surface.
However, given a pencil of three-planes containing a plane in $\bP^4$, two elements 
will be tangent to a quadric hypersurface $Q$.  Thus the double cover over $\bP^1$ 
is branched in two points, so the fibers of the mapping are rational curves.
\end{rema}

\begin{ques}
Where else does this periodic behavior occur for minimal quartic del Pezzo surfaces over
$F=k(t)$?  Can this be analyzed via moduli of vector bundles and Serre's construction?
\end{ques}

\section{Structure of families of height twelve}
\label{sect:twelve}

Recall the notation of Example~\ref{exam:heighttwelve}.  We have
two projections
$$\begin{array}{cc}
\cX & \subset  \bP(V^{\vee})=\bP(\cO_{\bP^1}(-1)^4 \oplus \cO_{\bP^1})  \stackrel{\pi_2}{\ra}  \bP^8\\
\quad \downarrow \scriptstyle{\pi}      &     \\
 \bP^1  &		
\end{array}
$$
and let $\cY$ denote the image of $\cX$ under $\pi_2$.  The canonical
section $\sigma(\bP^1)$ is contracted under $\pi_2$ to a double point $y\in \cY$.  
The image is a nodal Fano threefold of genus seven and degree twelve.  
See \cite{MukAJM} for concrete descriptions of {\em smooth} Fano varieties
of this type.

There is an alternate approach to $\cX$:  Project $\cX \ra \bP^1$ from the
distinguished section $\sigma(\bP^1)$
$$\widetilde{\cX} = \Bl_{\sigma(\bP^1)}(\cX) \subset \bP(\cO_{\bP^1}(-1)^{4}) \simeq \bP^1 \times \bP^3;$$
let $\cE$ denote the exceptional divisor.
The resulting fibration
$$\tilde{\pi}:\widetilde{\cX} \ra \bP^1$$
is a cubic surface fibration containing a constant line $\cE$. 
Now projection onto the second factor
$$\tilde{\pi}_2:\widetilde{\cX} \ra \bP^3$$
is birational and collapses $\cE$ onto a line $E\subset \bP^3$.  
The center of $\tilde{\pi}_2$ is the union of $E$ and a curve $C$,
of genus seven, admitting a line bundle $M$ with $h^0(M,C)=4$,
and $\deg(M)=8$, i.e., a $g^3_8$; the divisor $K_C-M$ is a $g^1_4$.
Note that $E$ and $C$ meet in four points $r_1,r_2,r_3,$ and $r_4$;
 the fibers of $\tilde{\pi}_2$
are mapped onto the pencil of cubic surfaces cutting out $E\cup C$.  
The birational mapping obtained by composing the inverse of $\tilde{\pi}_2$,
the blow up of $\cX$, and $\pi_2$
$$\bP^3 \dashrightarrow \cY \subset \bP^8$$
is induced by the linear series of quartics vanishing along $C\cup E$.  

Note that the intermediate Jacobian $\IJ(\cX)\simeq \JJ(C)$.  
This also has a Prym interpretation:  Projecting from $\cE$ fiberwise
allows us to realize $\widetilde{\cX}$ as a conic bundle
$$\widetilde{\cX} \ra \bP^1 \times \bP^1,$$
with discriminant a genus-eight curve $D$ of bidegree $(3,5)$.  If
$\tilde{D} \ra D$ is the cover arising from the conic bundle then
$$\IJ(\cX)\simeq \Prym(\tilde{D} \ra D).$$
This is an instance of the tetragonal construction of Donagi and Recillas
\cite{DonagiBAMS}.

We characterize sections of $\pi$ of small degree, using the geometry
sketched above:
\begin{enumerate}
\item{the canonical section $\sigma(\bP^1)$;}
\item{the sections arising from exceptional fibers over points of $C$,
which move in a one-parameter family;}
\item{the sections arising from secant lines to $C\subset \bP^3$,
which are parametrized by $\Sym^2(C)$;}
\item{the sections arising from conics five-secant to $C$,
which are parametrized by $\Sym^3(C)$;}
\item{the sections arising from twisted cubics meeting $C$ in eight points,
which are parametrized by $\Sym^4(C)$; we explain why
this is the case below;}
\item{the sections arising from eleven-secant quartic rational curves,
which are parametrized by $\Sym^5(C)$;}
\item{the sections arising from quintic rational curves
$14$-secant to $C$, which are parametrized by $\Sym^6(C)$;}
\item{the sections arising from sextic rational curves
$17$-secant to $C$, which are parametrized by $\Sym^7(C)$.}
\end{enumerate}

First the conic case:  each triple
of points on $C$ spans a plane, whose residual intersection 
with $C$ consists of five points that uniquely determine a 
five-section conic.

\

We do the cubic case.  Given such a section $R$ meeting
$C$ in $p_1,\ldots,p_8$, there is a three-dimensional vector space of
quadrics cutting out $R$, which induces a $g^2_8$ on $C$.  
Note that the variety of $g^2_8$'s on a 
genus seven curve is four-dimensional, reflecting the
fact that a $g^2_8$ is of the form $K_C-c_1-c_2-c_3-c_4$
for $c_1,c_2,c_3,c_4 \in C$.  This gives a morphism from the 
space of sections to $\Sym^4(C)$.

Conversely, suppose we are given generic points $c_1,c_2,c_3,c_4 \in C$.
Choose a twisted cubic $R'$ incident to $E$ at two points and containing $c_1,\ldots,c_4$.  (The curve $R'$ is a tri-section of our original del Pezzo
fibration.)
The union $R'\cup E \cup C$ has degree $12$ and arithmetic genus $15$, thus
$$\dim I_{R'\cup E\cup C}(4)=35-(48+1-15)=1$$
and the curve $R' \cup E \cup C$ sits on a quartic surface $S$.  
Note that this generically
has Picard lattice:
$$\begin{array}{c|cccc}
    & h  & C & E& R' \\
 \hline
h   & 4   & 8 & 1& 3 \\
C   & 8  & 12 & 4& 4 \\
E   & 1  & 4  & -2 & 2 \\ 
R'  & 3 &  4  & 2  & -2
\end{array}
$$  
Let $F$ be the residual to $R'\cup E$ in a complete intersection of a 
quadric surface and $S$; this is an elliptic quartic curve.  
Consider the union $C\cup F \cup E$;  we have
$$\begin{array}{rcl}
h^0(\cI_{C\cup F \cup E}(4)) & \ge  & 35-h^0(\cO_{C\cup F \cup E}(4))\\
& &= 35-(4\deg(C\cup F \cup E)+1-\sg(C\cup F\cup E))\\
& &=35-(52+1-20)=2.
\end{array}
$$
Let $R$ be residual to $C\cup F\cup E$ in this complete intersection.
Note that
$$C\cdot R=8, \quad R\cdot R=-2,$$
which gives the desired section.

\

We turn to the quartic case.  First we construct the morphism 
from the space of sections to $\Sym^5(C)$.  Suppose that
$R$ is a rational quartic curve in $\bP^3$
meeting $C \subset \bP^3$ in eleven points.  Choose a K3 surface
$S$ containing $C$ and $R$, with Picard lattice:
$$\begin{array}{c|ccc}
    & h  & C & R \\
 \hline
h   & 4   & 8 & 4 \\
C   & 8  & 12 & 11 \\
R   & 4  & 11 & -2 
\end{array}
$$
These exist as
$$\dim I_{C\cup R}(4) \ge 35-(48+1-17)=3.$$
Now $R$ is also contained in a quadric surface $Q$, generally
unique.  Let $R'$ denote the residual to $R$ in $S\cap Q$,
so that
$$h\cdot R'=4, \quad R'\cdot R' = -2, \quad C\cdot R'=5.$$
The intersection of $C$ with $R'$ gives five points on $C$.

We refer the reader to \cite[Sect.~8]{HTembed} for the inverse construction.

\

Now for the case of quintic rational curves:
Given six points $c_1,\ldots,c_6 \in C$, let $R'$ denote the unique
twisted cubic in $\bP^3$ through these points.  Consider the
pencil of quartic surfaces containing the union
$$C \cup_{c_1,\ldots,c_6} R',$$
each of which has the lattice polarization:
$$\begin{array}{r|ccc}
& h  &  C  & R' \\
\hline
h & 4  & 8 & 3 \\
C & 8 & 12 & 6 \\
R' & 3 & 6 & -2
\end{array}
$$
Consider the residual curve $R$ to $C\cup R'$ in this complete intersection;
it has genus zero, degree five, and meets $C$ in fourteen points.  
Thus $R$ corresponds to a section of $\pi$; these are parametrized
by elements of $\Sym^6(C)$, which may be interpreted as the theta divisor of
$\JJ(C)$.

\

Finally, we turn to the case of sextic rational curves.  
We construct the morphism to $\Sym^7(C)$.  Suppose that
$R$ is $17$-secant to $C$.  The union $C\cup R$ has
genus $23$ and degree $14$ so by Riemann-Roch
$$h^0(\cO_{C\cup R}(4))\ge 56+1-23=34$$
and thus is contained in a quartic surface $S$.  This has Picard
group:
$$\begin{array}{r|ccc}
& h  &  C  & R \\
\hline
h & 4  & 8 & 6 \\
C & 8 & 12 & 17 \\
R & 6 & 17 & -2
\end{array}
$$
Consider the elliptic fibration $C-h$; it has $R'=3h-R$ as a section
$$(C-h)\cdot R'=1, \quad C\cdot R'=7.$$
Geometrically, this arises from $R$ via residuation:  A rational
sextic space curve $R$ satisfies
$$h^0(I_R(3)) \ge 20- (18+1)=1$$
and thus lies in a cubic surface $T$.  Then $R'$ is the residual to $R$
in $T\cap S$.  The intersection of $R'$ with $C$ consists of seven points,
which gives us our point in $\Sym^7(C)$.  

This construction is invertible by the following result, proven in 
\cite{HTembed}:
\begin{quote}
Let $C \subset \bP^3$ denote a generic tetragonal curve of genus $7$ 
as above, and $c_1,\ldots,c_7 \in C$ generic points.  Then there
exists a {\em unique} sextic rational curve $R'\subset \bP^3$ 
and quartic K3 surface $S\subset \bP^3$ such that
$$R'\cap C= \{c_1,\ldots,c_7 \}$$
and 
$$C\cup R' \subset S.$$
\end{quote}

\begin{rema}
There is a straightforward construction of a family
of sections parametrized by a rationally connected fibration over $\JJ(C)$.
Fix $c_1,\ldots,c_7 \in C$ and let $R''$ denote a rational quartic
curve containing these points.  Note that the possible such $R''$
correspond to ruled quadric surfaces through $c_1,\ldots,c_7$, 
a double cover of $\bP(I_{c_1,\ldots,c_7}(2))$ branched over a 
quartic plane curve, i.e., a degree two del Pezzo surface.  
These curves yield five-sections of $\pi:\cX \ra \bP^1$.  Using
the argument for Brumer's theorem \cite{Brumer,Leep}, for each such 
odd-degree multisection there exists a family of sections, obtained
via a sequence of residuations, and parametrized by a rationally
connected variety. 
\end{rema}


We summarize the evidence for the existence of sections over generic fibrations of small height
collected so far.  

\

\centerline{
\begin{tabular}{llll}
height & Existence of sections? & Cross reference  \\
\hline
  0  & always  & Remark \ref{rema:heightzero} \\
  2  & --      & (see Example~\ref{exam:heighttwo})  \\
  4  & always  & Remark \ref{rema:heightfour} \\
  6  & always  & Remark \ref{rema:heightsix} \\
  8  & always & Remark \ref{rema:heighteight} \\
  10  & always  & Remark \ref{rema:heightten} and \S\ref{sect:test} \\
  12  & always  & Remark \ref{rema:heighttwelve}
\end{tabular}
}

\section{Fibrations of height fourteen to twenty}
\label{sect:heightlarger}

Let $\pi:\cX \ra \bP^1$ be a generic quartic del Pezzo fibration of height $14+2m, m=0,1,2,3$.
This corresponds to the following cases of Section~\ref{sect:explicit}:
\begin{itemize}
\item{height $14$: Example~\ref{exam:heightfourteen} of Case 4;}
\item{height $16$: the $n=0$ instance of the even part of Case 3;}
\item{height $18$: the $n=0$ instance of the odd part of Case 2;}
\item{height $20$: the $n=0$ instance of the odd part of Case 1.}
\end{itemize}
Then we have
$$
\begin{array}{ccccc}
\cX & \subset & \bP^1 \times \bP^{7-m} & \stackrel{\pi_2}{\ra} & \bP^{7-m} \\
    &         &  {\scriptstyle \pi} \downarrow \quad &         &           \\
    &         & \bP^1  &				&    
\end{array}
$$
and we write $\cY=\pi_2(\cX)$.  
We have:
\begin{itemize}
\item{$\cY$ is anticanonically embedded in $\bP^{7-m}$ of degree $10-2m$;}
\item{$\pi_2:\cX \ra \cY$ is small, generically contracting 
$2^{m+1}$ sections of $\pi$ to nodes of $\cY$.}
\end{itemize}
Indeed, the anticanonical divisor of $\cX$ (as computed via adjunction)
coincides with $\pi_2^*\cO_{\bP^{7-m}}(1)$.  
A direct computation with Bezout's theorem shows that the number of sections
contracted by $\pi_2$ is as claimed.  
(Over non-closed fields, there is no reason for these to be defined over the ground
field.)

In the following sections, we make the geometry even more explicit.  

\subsection*{Height fourteen}
Recall the notation from Example~\ref{exam:heightfourteen}.
The two distinguished sections are denoted $\sigma_1,\sigma_2:\bP^1 \ra \cX$. 
Projection onto the second factor gives
$$\begin{array}{ccccc}
\cY & \subset & \pi_2(D) & \subset & \pi_2(\bP(V^{\vee})) \\
	   &         &  ||      &         &    ||                \\
       &    & W       &         &   \Sigma.
\end{array}
$$
Note that $\Sigma$ is a cone over a Segre threefold $\bP^1 \times \bP^2 \subset \bP^5$,
with vertex $\ell$, a line.  
The equations of $\Sigma$ are 
given by the $2\times 2$ minors of the matrix:
$$\left( \begin{matrix} u & v & w \\
		        x & y & z \end{matrix} \right);$$
we express
$$\ell=\{u=v=w=x=y=z=0 \} \subset \Sigma.$$

We may interpret $W$ as follows:  Fix a fiber 
$\{\mathrm{pt.}\} \times \bP^2$ in the Segre threefold, yielding a subspace
$$\Lambda\simeq \bP^4 \subset \Sigma, \quad \Lambda \supset \ell.$$  
Write
$$\Lambda=\{u=v=w=0 \} \subset \Sigma$$
and 
$$Q'=\{q_1(u,v,w)+u l_1(x,y,z,s,t)+v l_2(x,y,z,s,t) + w l_3(x,y,z,s,t)=0 \},$$
where $q_1$ is quadratic and the $l_i$ are linear. 
Note that $Q'$
is necessarily singular along some line $\ell' \subset \Lambda$, 
and thus has rank at most six;  
in suitable coordinates, we have
$$Q'=\{u m_1+v m_2 + w m_3=0 \},$$
where the $m_i$ are linear.
Let $W$ denote the residual to $\Lambda$ in $Q'\cap \Sigma$, which
is contained in the additional quadric hypersurface
$$Q''=\{x m_1 + y m_2 + z m_3 =0\}.$$
Suppose that $\Lambda'\simeq \bP^4$ is a cone over a generic fiber 
$\{\mathrm{pt.}'\} \times \bP^2$ in the Segre threefold;  then
$Q'$ and $Q''$ cut out the same quadric hypersurface on $\Lambda'$, i.e.,
$$Q'\cap \Lambda'=Q'' \cap \Lambda'.$$

We claim that $W$ is a linear section of the Grassmannian $\Gr(2,5) \subset \bP^9$.
Indeed, let $\xi_{ij},1 \le i<j \le 5$ denote the Pl\"ucker coordinates
on the Grassmannian and consider the linear section $\xi_{14}=0$.  
The Pl\"ucker relations specialize to the $2\times 2$ minors of
$$\left( \begin{matrix} \xi_{12} & \xi_{13} & -\xi_{15} \\
			\xi_{24} & \xi_{34} & \xi_{45} \end{matrix} \right)$$
as well as 
$$\xi_{12}\xi_{35}-\xi_{13}\xi_{25}+\xi_{15}\xi_{23}=0 \quad
-\xi_{23}\xi_{45}+\xi_{24}\xi_{35}-\xi_{25}\xi_{34}=0.$$
Setting 
$$u=\xi_{12},v=\xi_{13},w=-\xi_{15},x=\xi_{24},y=\xi_{34},
z=\xi_{45}$$
and $m_1=\xi_{35},m_2=-\xi_{25},$ and $m_3=-\xi_{23}$,
we obtain the equations for $W$ given above.  

In particular, $W$ has degree five.  Furthermore, a direct
calculation shows that $W$ has ordinary double points along $\ell$.
This reflects the fact that the birational morphism $\pi_2:D\ra W$ 
collapses $\ell \times \bP^1$ to $\ell$,
and any small contraction of a smooth variety is singular. 

The variety $\cY$ is the intersection of $W$ with a generic quadric $Q$, and thus has degree ten
and two ordinary singularities 
$$\{y_1,y_2 \}=\ell \cap Q= \{\pi_2(\sigma_1(\bP^1)),\pi_2(\sigma_2(\bP^1))\}.$$
The dualizing sheaf $\omega_{\cY}=\cO_{\cY}(-1)$, so $\cY$ is a Fano variety with
ordinary double points, and thus a degeneration of a smooth Fano variety
of degree ten and genus six \cite{Namikawa}.  
Degenerations with {\em one} node are studied in \cite{DIM2}.

We describe the quartic del Pezzo fibration in terms of the geometry of $\cY \subset \bP^7$.  The fibers are obtained by intersecting the subspaces $\Lambda'$ specified
above (associated with the fibers of $\bP^1 \times \bP^2 \ra \bP^1$)
with $Q$ and $Q'$;  note that $Q'\equiv Q''\pmod{\Lambda'}$.

\subsection*{Height sixteen}
In this case $\cY \subset \bP^6$ is a complete intersection of three quadrics with four nodes.
These have expected dimension
$$\dim \Gr(3,\Gamma(\cO_{\bP^6}(2))) - \dim \PGL_7 - 4 = 23,$$
which is equal to the expected dimension of the space of quartic del Pezzo fibrations of height sixteen.

\subsection*{Height eighteen}
Here $\cY \subset \bP^5$ is a complete intersection of a quadric and cubic with eight nodes.  
These have expected dimension $26$,
which is equal to the expected dimension of the space of quartic del Pezzo fibrations of height eighteen.

\subsection*{Height twenty}
Here $\cY \subset \bP^4$ is a quartic threefold with sixteen nodes; however, these
have expected dimension $30$, greater than the expected dimension ($\frac{3}{2}h(\cX)-1=29$)
of the quartic
del Pezzo fibrations of height twenty.

Consider the Grassmannian $\Gr(2,\Gamma(\cO_{\bP^4}(2)))$ parametrizing 
quartic del Pezzo surfaces.  A generic conic curve in this Grassmannian corresponds to the
rulings of a nonsingular quadric surface in $\bP(\Gamma(\cO_{\bP^4}(2)))$.  
These can be expressed in equations as
$$P_1s+Q_1t=P_2s+Q_2t=0$$
where $P_1,Q_1,P_2,Q_2 \in \Gamma(\cO_{\bP^4}(2))$.  In other words
$$\cX \subset \bP^1 \times \bP^4$$
as a complete intersection of two hypersurfaces of bidegree $(1,2)$.  
These depend on 
$$2(2\times 15-2) - (3+24)=29$$
parameters.  
Let $\sigma_1,\ldots,\sigma_{16}: \bP^1 \ra \cX$ denote the constant sections, corresponding to the solutions 
$$\{y_1,\ldots,y_{16}\}=\{P_1=Q_1=P_2=Q_2=0\}.$$
Projection onto the second factor gives the quartic hypersurface
$$\cY=\{P_1Q_2-Q_1P_2=0\} \subset \bP^4,$$
with nodes at $y_1,\ldots,y_{16}$.  

There is a second del Pezzo fibration
$$\cY \dashrightarrow \bP^1$$
constructed as follows.  Fix a fiber $\cX_t =\pi^{-1}(t)$ and its image 
$\cX_t \subset \cY$.  The linear series of quadrics vanishing at $\cX_t$ gives
a mapping $\pi'$, whose fibers are also del Pezzo surfaces of degree four.  
This is a morphism where $\cX_t \subset \cY$ is Cartier, i.e., away 
from $y_1,\ldots,y_{16}$.  
There exists a small resolution $\cX'\ra \cY$ such that the induced
$$\pi':\cX' \ra \bP^1$$
is a morphism.  We obtain $\cX'$ from $\cX$ by flopping each of 
$\sigma_1,\ldots,\sigma_{16}$.  
Finally, let $\tilde{Y}$ denote the blow up of $\cY$ at each of 
the $y_i$, which coincides with the closure of the graph of the 
birational mapping $\cX \dashrightarrow \cX'$;  the induced morphism
$$(\pi,\pi'):\tilde{Y} \ra \bP^1 \times \bP^1$$
has genus five canonical curves as fibers.  

Projecting from one of the $\sigma_j$, we obtain a conic bundle
$$
\cW \ra \bP^1\times \bP^1
$$
branched along a curve $D$ of bidegree $(5,5)$, of genus $16$.  
The Prym variety therefore has dimension $15$; see also \cite{Cheltsov},
where the non-rationality of a generic $\cX$ is established.

\bibliographystyle{alpha}
\bibliography{delPezzo}

\end{document}